\newtheorem{Lemma}{Lemma}
\newtheorem{Proposition}{Proposition}
\newtheorem{Definition}{Definition}
\newtheorem{Theorem}{Theorem}
\newtheorem{Corollary}{Corollary}
\newtheorem{Remark}{Remark}
\DeclareMathOperator{\GL}{GL}
\DeclareMathOperator{\Gal}{Gal}
\DeclareMathOperator{\Quot}{Quot}
\DeclareMathOperator{\m-Spec}{m-Spec}
\newcommand{\Qp}{\mathbb{Q}_p}
\newcommand{\Zp}{\mathbb{Z}_p}
\newcommand{\Fp}{\mathbb F_p}
\newcommand{\ZZ}{\mathbb Z}
\newcommand{\NN}{\mathbb N}
\newcommand{\OO}{\mathcal O}
\newcommand{\mm}{\mathfrak m}
\newcommand{\nn}{\mathfrak n}
\newcommand{\pp}{\mathfrak p}
\begin{document}

\title{Hilbert-Samuel multiplicities of certain deformation rings}
\author{Fabian Sander}
\maketitle

\abstract{We compute presentations of crystalline framed deformation rings of a two dimensional representation $\bar{\rho}$ of the absolute Galois group of $\Qp$, when $\bar{\rho}$ has scalar semi-simplification,
the Hodge-Tate weights are small and $p>2$. In the non-trivial cases, we show that the special fibre is geometrically irreducible, generically reduced and the Hilbert-Samuel multiplicity is either $1$, $2$ or $4$ 
depending on $\bar{\rho}$. We show that in the last two cases the deformation ring is not Cohen-Macaulay.}

\medskip

\section{Introduction}
Let $p>2$ be a prime. Let $k$ be a finite field of characteristic $p$, $E$ be a finite totally ramified extension of $W(k)[\frac{1}{p}]$ with ring of integers $\OO$ and uniformizer $\pi.$ For a given continuous representation $\bar{\rho}\colon G_{\Qp}\rightarrow \GL_2(k)$ we consider the universal framed deformation ring $R_{\bar{\rho}}^{\square}$ and the universal framed deformation $\rho^{univ}\colon G_{\Qp}\rightarrow \GL_2(R_{\bar{\rho}}^{\square}).$ For all $\pp\in \m-Spec(R_{\bar{\rho}}^{\square}[\frac{1}{p}]),$ the set of maximal ideals of $R_{\bar{\rho}}^{\square}[\frac{1}{p}],$ we can specialize the universal representation at $\pp$ to obtain the representation \[\rho_{\pp}\colon G_{\Qp}\rightarrow \GL_2(R_{\bar{\rho}}^{\square}[\frac{1}{p}]/\pp),\] where $R_{\bar{\rho}}^{\square}[\frac{1}{p}]/\pp$ is a finite extension of $\Qp.$ Let $\tau\colon I_{ G_{\Qp}}\rightarrow \GL_2(E)$ be a representation with an open kernel, where $I_{ G_{\Qp}}$ is the inertia subgroup of $G_{\Qp}.$ We also fix integers $a,b$ with $b\ge0$ and a continuous character $\psi\colon G_{\Qp}\rightarrow \OO^{\times}$ such that $\overline{\psi\epsilon}=\det(\bar{\rho}),$ where $\epsilon$ is the cyclotomic character. Kisin showed in \cite{MR2373358} that there exist unique reduced $\OO$-torsion free quotients $R_{\bar{\rho}}^{\square,\psi}(a,b,\tau)$ and $R_{\bar{\rho},cris}^{\square,\psi}(a,b,\tau)$ of $R_{\bar{\rho}}^{\square}$ with the property that $\rho_{\pp}$ factors through $R_{\bar{\rho}}^{\square,\psi}(a,b,\tau)$ resp. $R_{\bar{\rho},cris}^{\square,\psi}(a,b,\tau)$ if and only if $\rho_{\pp}$ is potentially semi-stable resp. potentially crystalline with Hodge-Tate weights $(a,a+b+1)$ and has determinant $\psi\epsilon$ and inertial type $\tau.$
If $\tau$ is trivial then $R_{\bar{\rho},cris}^{\square,\psi}(a,b):=R_{\bar{\rho},cris}^{\square,\psi}(a,b,\mathds{1}\oplus\mathds{1})$ parametrizes all the crystalline lifts of $\bar{\rho}$ with Hodge-Tate weights $(a,a+b+1)$ and determinant $\psi\epsilon.$
The Breuil-M\'ezard conjecture, proved by Kisin for almost all $\bar{\rho},$ see also \cite{MR1944572}, \cite{BM2}, \cite{MR3134019}, \cite{HU1}, \cite{Pa1}, says that the Hilbert-Samuel multiplicity of the ring $R_{\bar{\rho}}^{\square,\psi}(a,b,\tau)/\pi$ can be determined by computing certain automorphic multiplicities, which do not depend on $\bar{\rho}$, and the Hilbert-Samuel multiplicities of $R_{\bar{\rho},cris}^{\square,\psi}(a,b)$ in low weights for $0\leq a\leq p-2, 0\leq b\leq p-1.$ For most $\bar{\rho},$ the Hilbert-Samuel multiplicities of $R_{\bar{\rho},cris}^{\square,\psi}(a,b)$ have already been determined. Our goal in this paper is to compute the Hilbert-Samuel multiplicity of the ring $R_{\bar{\rho},cris}^{\square,\psi}(a,b)$ with $0\leq a\leq p-2, 0\leq b\leq p-1$ when 

$$\bar{\rho}\colon G_{\Qp}\rightarrow \GL_2(k), \quad g\mapsto \begin{pmatrix} \chi(g)&\phi(g)\\0&\chi(g)\end{pmatrix}.$$

One may show that $R_{\bar{\rho},cris}^{\square.\psi}(a,b)$  is zero if either $b\neq p-2$ or the restriction 
of $\chi$ to $I_{\Qp}$ is not equal to $\epsilon^a$ modulo $\pi$.  

\begin{Theorem}
Let $a$ be an integer with $0\leq a\leq p-2$ such that $\chi|_{I_{\Qp}}\equiv \epsilon^a \pmod{\pi}$. Then $R_{\bar{\rho},cris}^{\square,\psi}(a,p-2)/\pi$ is geometrically irreducible, generically reduced and
\begin{align*}
e(R_{\bar{\rho},cris}^{\square,\psi}(a,p-2)/\pi)=\left\{
\begin{array}{l}
1,\ \mbox{if}\ \bar{\rho}\otimes \chi^{-1}\ \mbox{is ramified}\\ 
2,\ \mbox{if}\ \bar{\rho}\otimes \chi^{-1}\ \mbox{is unramified, indecomposable}\\
4,\ \mbox{if}\ \bar{\rho}\otimes \chi^{-1}\ \mbox{is split}
\end{array}
\right.
\end{align*}
In the last two cases, $R_{\bar{\rho},cris}^{\square,\psi}(a,p-2)$ is not Cohen-Macaulay.
\end{Theorem}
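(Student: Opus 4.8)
The plan is to read the entire statement off an explicit presentation of $R:=R_{\bar\rho,cris}^{\square,\psi}(a,p-2)$ as a quotient of a formal power series ring over $\OO$, and the first --- and decisive --- step is to obtain that presentation. After twisting $\bar\rho$ by a suitable crystalline character of Hodge--Tate weight $0$, which replaces $\psi$ by a twist but changes nothing else of substance and induces an isomorphism of the corresponding deformation rings, one may assume $\chi|_{I_{\Qp}}$ is realised by the Teichm\"uller character; the computation then splits into the three cases of the trichotomy for $\bar\rho\otimes\chi^{-1}$. In each case I would compute $R$ from Kisin's description of crystalline framed deformation rings via Kisin modules; because $b=p-2$ places us at the boundary of the Fontaine--Laffaille range, the top step of the Hodge filtration needs careful treatment (equivalently one passes to Wach modules, or to Fontaine--Laffaille modules over $\OO$). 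The outcome should be $R\cong\OO[[x_1,\dots,x_n]]/I$ with $I$ a short explicit list of relations: formally smooth over $\OO$ in the ramified case, and with strictly more relations than a complete intersection in the unramified-indecomposable and split cases.

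Granting the presentation, write $S:=k[[x_1,\dots,x_n]]$ (note $\OO/\pi=k$, as $E/W(k)[\tfrac1p]$ is totally ramified) and $\bar R:=R/\pi=S/\bar I$, with $\bar I$ the reduction of $I$. From the equations I would list the minimal primes of $\bar I$ and check that there is exactly one, $\mathfrak q$, so $\Spec\,\bar R$ is irreducible; geometric irreducibility then follows by exhibiting $\bar R_{\mathrm{red}}=S/\mathfrak q$ as a visibly geometrically integral ring (in the cases at hand, a power series ring modulo a regular sequence of absolutely irreducible polynomials; in the ramified case there is nothing to do, $\bar R$ being already regular). For generic reducedness I would verify, again from the equations, that the images of the generators of $\bar I$ span the cotangent space of $S$ at $\mathfrak q$ --- a Jacobian-rank condition --- so that $\bar I S_{\mathfrak q}$ is the maximal ideal of the regular local ring $S_{\mathfrak q}$ and $(\bar R)_{\mathfrak q}$ is a field; since $k$ is perfect, generic reducedness is then automatically geometric.

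For the multiplicity I would invoke the associativity formula for Hilbert--Samuel multiplicities. Since $\bar R$ has the single minimal prime $\mathfrak q$ and $(\bar R)_{\mathfrak q}$ is a field, the formula collapses to $e(\bar R)=e(\bar R/\mathfrak q)=e(\bar R_{\mathrm{red}})$, reducing the problem to the multiplicity of an honest complete local domain. This I would compute from its tangent cone, i.e.\ from the ideal of leading forms of $\mathfrak q$ in the polynomial ring $\gr_{\mathfrak{m}} S=k[x_1,\dots,x_n]$, obtained by a standard-basis calculation from the explicit generators: the leading-form ideal cuts out a linear subspace (multiplicity $1$) in the ramified case, a degree-$2$ cone in the unramified-indecomposable case, and a degree-$4$ cone in the split case. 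The degree-$4$ case is where I expect the bookkeeping to be heaviest.

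Finally, to see that $R$ is not Cohen--Macaulay in the last two cases: since $R$ is $\OO$-torsion free, $\pi$ is a nonzerodivisor in $\mathfrak m_R$, so $R$ is Cohen--Macaulay if and only if $\bar R$ is. Were $\bar R$ Cohen--Macaulay it would satisfy Serre's condition $(S_1)$, and together with generic reducedness $(R_0)$ this would force $\bar R$ to be reduced; but the presentation shows $\bar I\subsetneq\sqrt{\bar I}$, i.e.\ $\bar R$ has a nonzero nilpotent. That nilpotent is supported on a proper closed subset of the irreducible scheme $\Spec\,\bar R$ (generic reducedness), so $\bar R$ has an embedded associated prime --- contradicting $(S_1)$. (Alternatively one computes enough of the minimal $S$-free resolution of $\bar R$ to see that its projective dimension exceeds $\mathrm{ht}(\bar I)$, whence $\mathrm{depth}\,\bar R<\dim\bar R$ by Auslander--Buchsbaum.) The decisive obstacle in all of this is the very first step: once the three presentations are in hand, the remaining commutative algebra is a finite, if laborious, computation.
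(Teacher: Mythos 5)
Your proposal correctly identifies where the difficulty lies, but it does not overcome it: the entire content of the theorem is the explicit presentation of $R_{\bar\rho,cris}^{\square,\psi}(a,p-2)$, and your plan for obtaining it is both deferred and pointed in a problematic direction. The Hodge--Tate weights here are $(a,a+b+1)=(0,p-1)$ after twisting, which is \emph{outside} the Fontaine--Laffaille range (filtration length $p-1>p-2$), so the Fontaine--Laffaille/Wach-module route you sketch is exactly where the standard integral theory breaks down; this is presumably why the multiplicity $4$ was not anticipated in the literature. The paper sidesteps all integral $p$-adic Hodge theory: the only input is that a crystalline lift of a reducible $\bar\rho$ with weights $(0,p-1)$ is itself reducible, an extension of an unramified character by $\epsilon^{p-1}$ times an unramified character. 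Since $\bar\rho$ has scalar semisimplification, the universal framed deformation factors through the free pro-$p$ group on two generators $\gamma,\delta$, so the crystalline locus becomes a closed condition in linear algebra on the pair of matrices $(\rho(\gamma),\rho(\delta))$, cut out by four explicit quadrics $I_1,\dots,I_4$. Even granting a candidate presentation, you omit the identification step: Kisin's ring is characterised as a reduced, $\OO$-flat quotient whose $\overline{\QQ}_p$-points are the crystalline lifts, so one must prove that the explicitly presented ring is reduced and $\OO$-torsion free before it can be identified with $R_{\bar\rho,cris}^{\square,\psi}$. That is the technical heart of the paper (the proof that $W(k)[[x_{11},\dots,y_{21}]]/(I_1,\dots,I_4)$ is a domain, via a delicate associated-graded and localization argument), and it has no counterpart in your outline.

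The downstream commutative algebra in your proposal is essentially sound and in places cleaner than the paper's. Your use of the associativity formula $e(\bar R)=\ell(\bar R_{\mathfrak q})\,e(\bar R/\mathfrak q)$ over the unique minimal prime, combined with generic reducedness, is a legitimate substitute for the paper's exact-sequence-plus-superficial-parameter argument, and your predicted tangent-cone degrees ($1$, $2$, $4$: regular; one quadric hypersurface; a complete intersection of two quadrics $x_{11}^2-x_{12}x_{21}$ and $J=y_{12}x_{21}+2x_{11}y_{11}+x_{12}y_{21}$) match the paper. Your non-Cohen--Macaulay argument via $(R_0)+(S_1)\Rightarrow$ reduced, against the nilpotent $J$, is correct and equivalent to the paper's embedded-prime argument. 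Two smaller cautions: ``a regular sequence of absolutely irreducible polynomials'' does not by itself give a geometrically integral quotient (the paper proves integrality of $R/(\pi,J)$ by an explicit localization at $y_{21}$), and irreducibility of $\Spec(R/\pi)$ itself requires knowing that the nilradical is exactly $(J)$, which again rests on the explicit equations. None of this can be carried out until the presentation is actually produced and certified.
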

The multiplicity $4$ does not seem to have been anticipated in the literature, see for example \cite[1.1.6]{MR2505297}.
Our method is elementary in the sense that we do not use any integral $p$-adic Hodge theory. The only $p$-adic Hodge theoretic input is that if $\rho$ is a crystalline lift of $\bar{\rho}$ with Hodge-Tate weights $(0,p-1),$ then we have an exact sequence 
\[
\begin{xy}
\xymatrix{
0\ar[r]&\epsilon^{p-1}\chi_1\ar[r]&\rho\ar[r]&\chi_2\ar[r]&0,
}
\end{xy}
\]
where $\chi_1,\chi_2\colon G_{\Qp}\rightarrow \OO^{\times}$ are unramified characters. This allows us to convert the problem into a linear algebra problem, which we solve in Lemma \ref{Lemma 2}. This gives us an explicit presentation of the ring $R_{\bar{\rho},cris}^{\square,\psi}(a,p-2)$, using which we compute the multiplicities in \S\ref{sec:Mult}. Our argument gives a proof of the existence of $R_{\bar{\rho},cris}^{\square.\psi}(a,p-2)$
independent of \cite{MR2373358}.
After writing this note we discovered that the idea to convert the problem into linear algebra already appears in \cite{Sn1}.

\section{The universal ring}
After twisting we may assume that $\chi=1$ and $a=0$ so that \[\bar{\rho}(g)= \begin{pmatrix} 1 & \phi(g) \\ 0 & 1 \end{pmatrix}.\]
Since the image of $\bar{\rho}$ in $\GL_2(k)$ is a $p$-group, the universal representation factors through the maximal pro-$p$ quotient of $G_{\Qp},$ which we denote by $G.$ We have the following commuting diagram
\[
\begin{xy}
\xymatrix{
G_{\Qp}\ar[r]\ar[d]& G\ar[d]\\
G_{\Qp}^{ab}\ar[r]& G_{\Qp}^{ab}(p)\cong G^{ab}
}
\end{xy}
\]
where $G_{\Qp}^{ab}:=\Gal(\Qp^{ab}/\Qp)$ is the maximal abelian quotient of $G_{\Qp}$ and can be described by the exact sequence
\[
\begin{xy}
\xymatrix{
1\ar[r]&\Gal(\Qp^{ab}/\Qp^{ur})\ar[r]&G_{\Qp}^{ab}\ar[r]&G_{\Fp}\ar[r]&1
}
\end{xy}
\]
where $\Qp^{ur}$ is the maximal unramified extension of $\Qp$ inside $\bar{\mathbb{Q}}_p.$
Local class field theory implies that the natural map \[G_{\Qp}^{ab}\rightarrow \Gal(\Qp^{ur}/\Qp)\times\Gal(\Qp(\mu_{p^{\infty}})/\Qp)\] is an isomorphism, where $\mu_{p^{\infty}}$ is the group of $p$-power order roots of unity in $\bar{\mathbb{Q}}_p.$ The cyclotomic character $\epsilon$ induces an isomorphism \[\Gal(\Qp(\mu_{p^{\infty}})/\Qp)\xrightarrow[\epsilon]{\cong}\Zp^{\times}\] and $\Gal(\Qp^{ur}/\Qp)\cong \hat \ZZ,$ hence \[G^{ab}\cong (1+p\Zp)\times \Zp,\] where the map onto the first factor is given by $\epsilon^{p-1}.$ We choose a pair of generators $\bar{\gamma},\bar{\delta}$ of $G^{ab}$ such that $\bar{\gamma}\mapsto (1+p,0)$ and $\bar{\delta}\mapsto (1,1).$
With \cite[Lemma 3.2]{MR1757878} we obtain that $G$ is a free pro-$p$ group in two letters $\gamma,\delta$ which project to $\bar{\gamma},\bar{\delta}.$
The way we choose these generators will be of importance in the following.
\begin{Lemma}\label{Lemma 1}
Let $\eta\colon G_{\Qp}\rightarrow \Zp^{\times}$ be a continuous character such that $\eta\equiv 1 (p).$ Then $\eta=\epsilon^k\chi$ for an unramified character $\chi$ if and only if $\eta(\gamma)=\epsilon(\gamma)^k$ and $p-1|k.$
\end{Lemma}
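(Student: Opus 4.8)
The plan is to exploit that, since $\eta\equiv 1\pmod p$ and $p>2$, the image of $\eta$ lies in the pro-$p$ group $1+p\Zp$; hence $\eta$ factors through the maximal pro-$p$ abelian quotient $G^{ab}\cong(1+p\Zp)\times\Zp$ and is determined by the pair $\big(\eta(\gamma),\eta(\delta)\big)$. First I would record the two facts about this identification that carry the argument: the projection onto the first factor is $\epsilon^{p-1}$, so $\epsilon(\gamma)^{p-1}=\epsilon^{p-1}(\gamma)=1+p$ (read through any lift of $\gamma$ to $G_{\Qp}$; the choice of lift is immaterial because we only ever use $\epsilon(\gamma)^k$ with $p-1\mid k$, which equals $(1+p)^{k/(p-1)}$), while $\epsilon^{p-1}$ is trivial on $\delta$; and the image of inertia $I_{\Qp}$ in $G^{ab}$ is exactly this first factor, topologically generated by $\bar\gamma$, so a continuous character of $G$ is unramified if and only if it kills $\bar\gamma$.

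For the ``only if'' direction, assume $\eta=\epsilon^k\chi$ with $\chi$ unramified. Reducing modulo $p$ and restricting to $I_{\Qp}$ gives $\bar\epsilon^{\,k}|_{I_{\Qp}}=1$, because $\bar\eta\equiv 1$ and $\chi$ is trivial on inertia; since $\bar\epsilon|_{I_{\Qp}}$ has order exactly $p-1$, this forces $p-1\mid k$. Writing $k=(p-1)m$, the character $\chi=\eta\,(\epsilon^{p-1})^{-m}$ then manifestly factors through $G$, and being unramified it satisfies $\chi(\gamma)=1$; evaluating $\eta=\epsilon^k\chi$ at $\gamma$ yields $\eta(\gamma)=\epsilon^{p-1}(\gamma)^m=(1+p)^m=\epsilon(\gamma)^k$.

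For the ``if'' direction, given $p-1\mid k$, say $k=(p-1)m$, I would set $\chi:=\eta\,\epsilon^{-k}=\eta\,(\epsilon^{p-1})^{-m}$, a bona fide continuous character $G_{\Qp}\to\Zp^{\times}$ that factors through $G$; the hypothesis $\eta(\gamma)=\epsilon(\gamma)^k=(1+p)^m$ gives $\chi(\gamma)=1$, so $\chi$ is trivial on the inertia part $\langle\bar\gamma\rangle$, i.e.\ unramified, and $\eta=\epsilon^k\chi$ as wanted. The step I expect to need the most care is the bookkeeping between $G_{\Qp}$ and its maximal pro-$p$ quotient $G$: checking that $\eta$, the candidate $\chi$, and $\epsilon^{p-1}$ all descend to $G^{ab}$, that the image of inertia there is precisely $\langle\bar\gamma\rangle$, and that $\epsilon^{p-1}(\gamma)=1+p$ under the normalization of $\gamma,\delta$ fixed above. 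Once these are in place the lemma is simply the comparison of the two coordinates of a character of $(1+p\Zp)\times\Zp$.
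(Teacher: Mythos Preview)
Your proof is correct and follows essentially the same approach as the paper: both reduce to the description $G^{ab}\cong(1+p\Zp)\times\Zp$ and the criterion that a character factoring through $G$ is unramified if and only if it kills $\bar\gamma$. The only small difference is in the forward direction, where the paper first notes $\chi(\gamma)=1$ and then deduces $p-1\mid k$ from $\epsilon(\gamma)^k\equiv 1\pmod p$, whereas you obtain $p-1\mid k$ from $\bar\epsilon^{\,k}|_{I_{\Qp}}=1$; your route is arguably a bit cleaner since it sidesteps any question about which lift of $\gamma$ one uses to evaluate $\epsilon$.
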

\begin{proof}
$"\Rightarrow":$ Since $\gamma$ maps to identity in $\Gal(\Qp^{ur}/\Qp),$ we clearly have $\chi(\gamma)=1$ for every unramified character $\chi.$ Hence $\epsilon(\gamma)^k\equiv 1 (p),$ which implies $p-1|k.$

$"\Leftarrow":$ From $\eta\epsilon^{-k}(\gamma)=1$ and the fact that $\delta$ maps to the image of identity in the maximal pro-$p$ quotient of $\Gal(\Qp(\mu_{p^{\infty}})/\Qp),$ we see that $\eta\epsilon^{-k}=\chi$ for an unramified character $\chi.$
\end{proof}
Since $G$ is a free pro-$p$ group generated by $\gamma$ and $\delta,$ to give a framed deformation of $\bar{\rho}$ to $(A,\mm_A)$ is equivalent to give two matrices in $\GL_2(A)$ which reduce to $\bar{\rho}(\gamma)$ and $\bar{\rho}(\delta)$ modulo $\mm_A.$ Thus
\[R_{\bar{\rho}}^{\square}=\OO[[x_{11},\hat x_{12},x_{21},t_{\gamma},y_{11},\hat y_{12},y_{21},t_{\delta}]]\] and the universal framed deformation is given by
\begin{gather*}
\rho^{univ}\colon G\rightarrow \GL_2(R_{\bar{\rho}}^{\square})\\
\gamma\mapsto \begin{pmatrix} 1+t_{\gamma}+x_{11}&x_{12}\\x_{21}&1+t_{\gamma}-x_{11}\end{pmatrix}\\
\delta\mapsto \begin{pmatrix} 1+t_{\delta}+y_{11}&y_{12}\\y_{21}&1+t_{\delta}-y_{11}\end{pmatrix}
\end{gather*}
where $x_{12}:=\hat x_{12}+[\phi(\gamma)],\ y_{12}:=\hat y_{12}+[\phi(\delta)]$ where $[\phi(\gamma)],[\phi(\delta)]$ denote the Teichm\"uller lifts of $\phi(\gamma)$ and $\phi(\delta)$ to $\OO.$
\begin{Remark}\label{Remark 1}
We note that there are essentially $3$ different cases:
\begin{enumerate}
\item $\bar{\rho}$ is ramified $\Leftrightarrow \phi(\gamma)\neq 0 \Leftrightarrow x_{12}\in (R_{\bar{\rho}}^{\square})^{\times};$
\item $\bar{\rho}$ is unramified, non-split $\Leftrightarrow \phi(\gamma)=0, \phi(\delta)\neq 0 \Leftrightarrow x_{12}\in \mm_{R_{\bar{\rho}}^{\square}},y_{12}\in (R_{\bar{\rho}}^{\square})^{\times};$
\item $\bar{\rho}$ is split $\Leftrightarrow \phi(\gamma)=0, \phi(\delta)=0 \Leftrightarrow x_{12},y_{12}\in \mm_{R_{\bar{\rho}}^{\square}}.$
\end{enumerate}
\end{Remark}

Let $\psi\colon G_{\Qp}\rightarrow \OO^{\times}$ be a continuous character, such that $\det(\bar{\rho})=\overline{\psi\epsilon},$ and let $R_{\bar{\rho}}^{\square, \psi}$ be the quotient of $R_{\bar{\rho}}^{\square}$ which parametrizes lifts of $\bar{\rho}$ with determinant $\psi\epsilon.$ Since $\gamma,\delta$ generate $G$ as a group, we obtain
\begin{align*}
R_{\bar{\rho}}^{\square,\psi}&\cong R_{\bar{\rho}}^{\square}/(\det(\rho^{univ}(\gamma)-\psi\epsilon(\gamma)),\det(\rho^{univ}(\delta)-\psi\epsilon(\delta)))\\
&\cong \OO[[x_{11},\hat x_{12},x_{21},y_{11},\hat y_{12},y_{21}]],
\end{align*}
because we can eliminate the parameters $t_{\gamma},t_{\delta}$ due to the relations $(1+t_{\gamma})^2=\psi\epsilon(\gamma)+x_{11}^2+x_{12}x_{21},t_{\gamma}\equiv 0 (\mm), (1+t_{\delta})^2=\psi\epsilon(\delta)+y_{11}^2+y_{12}y_{21},t_{\delta}\equiv 0 (\mm).$ We let $v:=\frac{1-\epsilon^{p-1}(\gamma)}{2}$ and define four polynomials
\begin{align}
I_1:=&\ (v+x_{11})(v-x_{11})-x_{12}x_{21}\label{eq:1}\\
I_2:=&\ (v+x_{11})^2y_{12}-2(v+x_{11})x_{12}y_{11}-x_{12}^2y_{21}\label{eq:2}\\ 
I_3:=&\ x_{21}^2y_{12}-2x_{12}(v-x_{11})y_{11}-(v-x_{11})^2y_{21}\label{eq:3}\\ 
I_4:=&\ (v+x_{11})x_{21}y_{12}-2x_{12}x_{21}y_{11}-x_{12}(v-x_{11})y_{21}.\label{eq:4}
\end{align}
Since for every representation with Hodge-Tate weights $(0,p-1)$ the determinant is a character of Hodge-Tate weight $p-1$ and $R_{\bar{\rho},cris}^{\square,\psi}(0,p-2)$ parametrizes all lifts $\rho_{\pp}$ with determinant $\psi\epsilon,$ we let from now on $\psi$ have Hodge-Tate weight $p-2,$ as otherwise $R_{\bar{\rho},cris}^{\square,\psi}(0,p-2)$ would be trivial. 
\begin{Definition}
We set \[R:=R_{\bar{\rho}}^{\square,\psi}/(I_1,I_2,I_3,I_4).\]
\end{Definition}
Our goal is to show that $R_{\bar{\rho},cris}^{\square,\psi}(0,p-2)$ is isomorphic to $R.$
\begin{Lemma}\label{Lemma 2}
If $\pp\in\m-Spec(R_{\bar{\rho}}^{\square,\psi}[\frac{1}{p}]),$ then $\pp\in \m-Spec(R[\frac{1}{p}])$ if and only if $\rho_{\pp}$ is reducible and $\rho_{\pp}(\gamma)$ acts on the $G$-invariant subspace with eigenvalue $\epsilon^{p-1}(\gamma).$ 
\end{Lemma}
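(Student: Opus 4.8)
\emph{Plan.} Fix such a $\pp$, put $L:=R_{\bar\rho}^{\square,\psi}[\tfrac1p]/\pp$ (a finite extension of $\Qp$), and write $A:=\rho_\pp(\gamma)$, $B:=\rho_\pp(\delta)\in\GL_2(L)$, $\lambda:=\epsilon^{p-1}(\gamma)$, $M:=A-\lambda$ (identity matrix understood), and $v,x_{11},x_{12},x_{21},t_\gamma,y_{11},\dots$ for the images in $L$ of the corresponding elements of $R_{\bar\rho}^{\square,\psi}$. Then $x_{11},\hat x_{12},x_{21},t_\gamma,y_{11},\hat y_{12},y_{21},t_\delta$ are topologically nilpotent in $L$, $2v=1-\lambda$, and $v$ and $2-v+t_\gamma$ are units of $L$ (here $p>2$ is used). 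Since $R[\tfrac1p]=R_{\bar\rho}^{\square,\psi}[\tfrac1p]/(I_1,I_2,I_3,I_4)$, the point $\pp$ lies in $R[\tfrac1p]$ if and only if $I_1,I_2,I_3,I_4$ all vanish at $\pp$, so the task is to match this vanishing with the stated property of $\rho_\pp$. One auxiliary fact is used throughout: $\det A=\psi\epsilon(\gamma)=\lambda$. Indeed $\rho_\pp$ has determinant $\psi\epsilon$, and as $\psi$ has Hodge--Tate weight $p-2$ the character $\psi\epsilon\,\epsilon^{-(p-1)}$ is Hodge--Tate of weight $0$; being valued in $1+\pi\OO$, which is torsion free, it is unramified, hence trivial on $\gamma$ by the choice of $\gamma$ (compare the proof of Lemma~\ref{Lemma 1}).

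\emph{Step 1: $I_1$ detects the eigenvalue.} Using the determinant relation $(1+t_\gamma)^2=\psi\epsilon(\gamma)+x_{11}^2+x_{12}x_{21}$ in $R_{\bar\rho}^{\square,\psi}$ together with $\lambda=1-2v$ and $\det A=\lambda$, I would verify the two identities
\[
I_1=(1-v)^2-(1+t_\gamma)^2=-(v+t_\gamma)(2-v+t_\gamma),\qquad \det M=\lambda^2-(\operatorname{tr}A)\lambda+\det A=-2\lambda(v+t_\gamma).
\]
Since $2-v+t_\gamma$ and $\lambda$ are units of $L$, the first gives $I_1(\pp)=0\iff t_\gamma$ specialises to $-v$, and the second gives $t_\gamma\mapsto-v\iff\det M=0\iff\lambda$ is an eigenvalue of $A=\rho_\pp(\gamma)$. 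So $I_1(\pp)=0$ exactly when $\rho_\pp(\gamma)$ has $\lambda$ as an eigenvalue.

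\emph{Step 2: reduction to, and encoding of, $\delta$-stability.} Assume $I_1(\pp)=0$, so $t_\gamma\mapsto-v$ and hence at $\pp$
\[
M=\begin{pmatrix} v+x_{11}& x_{12}\\ x_{21}& v-x_{11}\end{pmatrix},
\]
a matrix with $\det M=I_1(\pp)=0$ that is nonzero, since its diagonal entries cannot both vanish ($v\neq0$). Thus $M$ has rank exactly $1$, $A$ is not scalar, and $\ell:=\ker M$ is the unique line on which $\rho_\pp(\gamma)$ acts by $\lambda$; hence ``$\rho_\pp$ is reducible with $\rho_\pp(\gamma)$ acting by $\lambda$ on a $G$-invariant line'' is equivalent to ``$\ell$ is stable under $B$''. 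To read this off, set $N:=\operatorname{adj}(M)=\begin{pmatrix} v-x_{11}& -x_{12}\\ -x_{21}& v+x_{11}\end{pmatrix}$; from $\det M=0$ we get $MN=NM=0$, and as $M$ has rank $1$, $\operatorname{im}N=\ker M=\ell$. Therefore $\ell$ is $B$-stable $\iff B(\operatorname{im}N)\subseteq\ker M\iff MBN=0$. A direct $2\times2$ computation (with $t_\gamma=-v$ substituted) shows that the entries of $MBN$ are, up to sign and up to multiples of $I_1$, precisely $I_2,I_3,I_4$ and $I_4$; explicitly, modulo $I_1$,
\[
MBN\equiv\begin{pmatrix} -I_4& I_2\\ -I_3& I_4\end{pmatrix}.
\]
So, when $I_1(\pp)=0$, the line $\ell$ is $B$-stable if and only if $I_2,I_3,I_4$ all vanish at $\pp$. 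Combining Steps 1 and 2 yields both directions of the Lemma.

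\emph{The main point.} The only input from $p$-adic Hodge theory is the single value $\det\rho_\pp(\gamma)=\epsilon^{p-1}(\gamma)$; after that the argument is pure linear algebra over the field $L$. I expect the real work, though entirely routine, to be the verification of the two displayed identities --- in particular computing $MBN$ entry by entry modulo $I_1$ and recognising the $I_j$.
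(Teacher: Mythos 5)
Your argument is correct and essentially reproduces the paper's proof in matrix form: the columns of your adjugate $N$ are exactly the eigenvectors $v_1,v_2$ the paper writes down, and $MBN=0$ is the paper's condition that $v_1,v_2,\rho_{\pp}(\delta)v_1,\rho_{\pp}(\delta)v_2$ be pairwise linearly dependent. One small remark: the $(2,1)$ entry of $MBN$ yields $I_3$ with middle term $-2x_{21}(v-x_{11})y_{11}$ rather than the $-2x_{12}(v-x_{11})y_{11}$ displayed in \eqref{eq:3}; yours is the version consistent with \eqref{eq:7}, \eqref{eq:8} and \eqref{eq:J3}, so this is a typo in the paper rather than an error in your computation.
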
 
\begin{proof}
Let $\pp\in\m-Spec(R_{\bar{\rho}}^{\square,\psi}[\frac{1}{p}]),$ such that $\rho_{\pp}$ is reducible and $\rho_{\pp}(\gamma)$ acts on the $G$-invariant subspace with eigenvalue $\epsilon^{p-1}(\gamma).$ Since $\det(\rho_{\pp}(\gamma))=\psi\epsilon(\gamma)=\epsilon(\gamma)^{p-1}$ and $\epsilon(\gamma)^{p-1}$ is an eigenvalue of $\rho_{\pp}(\gamma),$ the other eigenvalue must be $1.$ Therefore we can write $1+t_{\gamma}=\frac{\epsilon(\gamma)^{p-1}+1}{2}$ and obtain 
\begin{align*}
0&=\det\begin{pmatrix} 1+t_{\gamma}+x_{11}-\epsilon(\gamma)^{p-1}&x_{12}\\x_{21}&1+t_{\gamma}-x_{11}-\epsilon(\gamma)^{p-1}\end{pmatrix}\\
&=(v+x_{11})(v-x_{11})-x_{12}x_{21}.
\end{align*}
If we now take $\pp$ as above but with $I_1:=(v+x_{11})(v-x_{11})-x_{12}x_{21}\subset \pp,$ it is easy to see that the vectors $v_1=\begin{pmatrix} -x_{12} \\ v+x_{11} \end{pmatrix}$ and $v_2=\begin{pmatrix} v-x_{11} \\ -x_{21} \end{pmatrix}$ are eigenvectors for $\rho_{\pp}(\gamma)$ with eigenvalue $\epsilon(\gamma)^{p-1}$ if they are non-zero. But at least one of them is non-zero because otherwise we obtain $v=0$ and thus $\epsilon(\gamma)^{p-1}=1,$ which is a contradiction to the definition of $\gamma.$ So $\rho_{\pp}$ is reducible with an invariant subspace on which $\rho_{\pp}(\gamma)$ acts by $\epsilon(\gamma)^{p-1}$ if and only if the vectors $v_1,v_2,\rho^{univ}(\delta)v_1,\rho^{univ}(\delta)v_2$ are pairwise linear dependent. It is easy to check that this is equivalent to the satisfaction of the equations $I_1=I_2=I_3=I_4=0.$
\end{proof}

\begin{Lemma}\label{Lemma 3}
\[\m-Spec(R[\dfrac{1}{p}])=\m-Spec(R_{\bar{\rho}}^{\square,\psi}(0,p-2)[\dfrac{1}{p}])\]
\end{Lemma}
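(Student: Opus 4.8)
The plan is to rewrite each side as a locus inside $\m-Spec(R_{\bar\rho}^{\square,\psi}[\frac1p])$, cut out by a condition on $\rho_\pp$, and then to match the two conditions. Since $R$ and $R_{\bar\rho,cris}^{\square,\psi}(0,p-2)$ are both quotients of $R_{\bar\rho}^{\square,\psi}$ (the latter because it has fixed determinant $\psi\epsilon$), both maximal spectra sit inside $\m-Spec(R_{\bar\rho}^{\square,\psi}[\frac1p])$. By the characterising property of $R_{\bar\rho,cris}^{\square,\psi}(0,p-2)$ from \cite{MR2373358}, a point $\pp$ lies in $\m-Spec(R_{\bar\rho,cris}^{\square,\psi}(0,p-2)[\frac1p])$ exactly when $\rho_\pp$ is crystalline with Hodge--Tate weights $(0,p-1)$ (the determinant being automatically $\psi\epsilon$). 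By Lemma \ref{Lemma 2}, $\pp$ lies in $\m-Spec(R[\frac1p])$ exactly when $\rho_\pp$ is reducible and has a $G$-stable line on which $\rho_\pp(\gamma)$ acts by $\epsilon^{p-1}(\gamma)$. So it is enough to prove, for $\pp\in\m-Spec(R_{\bar\rho}^{\square,\psi}[\frac1p])$, that $\rho_\pp$ is crystalline of Hodge--Tate weights $(0,p-1)$ if and only if it is reducible with such a line.

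The inclusion ``$\supseteq$'' needs only the $p$-adic Hodge theoretic input recalled in the introduction: a crystalline lift of $\bar\rho$ with Hodge--Tate weights $(0,p-1)$ fits into $0\to\epsilon^{p-1}\chi_1\to\rho_\pp\to\chi_2\to 0$ with $\chi_1,\chi_2$ unramified. Then $\epsilon^{p-1}\chi_1$ is a $G$-stable line, and since $\gamma$ maps to the identity in $\Gal(\Qp^{ur}/\Qp)$ we have $\chi_1(\gamma)=1$, so $\rho_\pp(\gamma)$ acts on it by $\epsilon^{p-1}(\gamma)$; by Lemma \ref{Lemma 2}, $\pp\in\m-Spec(R[\frac1p])$.

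For ``$\subseteq$'', take $\pp\in\m-Spec(R[\frac1p])$ and write $0\to\eta_1\to\rho_\pp\to\eta_2\to 0$ with $\eta_1(\gamma)=\epsilon^{p-1}(\gamma)$. Both characters factor through the pro-$p$ quotient $G$, and since $\bar\gamma$ was chosen to generate topologically the image of inertia in $G^{ab}$, a continuous character of $G$ is unramified exactly when it is trivial on $\gamma$; this is the reasoning of Lemma \ref{Lemma 1}, applied over the residue field $L:=R_{\bar\rho}^{\square,\psi}[\frac1p]/\pp$. Hence $\eta_1=\epsilon^{p-1}\chi_1$ with $\chi_1$ unramified, and, using $\det\rho_\pp=\psi\epsilon$ and the identity $\psi\epsilon(\gamma)=\epsilon^{p-1}(\gamma)$ already underlying the computation in Lemma \ref{Lemma 2}, the character $\eta_2=\psi\epsilon\,\eta_1^{-1}$ is trivial on $\gamma$, hence unramified, say $\eta_2=\chi_2$. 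It remains to see that this extension of $\chi_2$ by $\epsilon^{p-1}\chi_1$ is crystalline; its Hodge--Tate weights are then $\{0,p-1\}$ and its determinant $\psi\epsilon$. I would establish this by a dimension count, comparing $\Ext^1_{G_{\Qp}}(\chi_2,\epsilon^{p-1}\chi_1)=H^1(G_{\Qp},L(\mu))$, with $\mu:=\epsilon^{p-1}\chi_1\chi_2^{-1}$, to the subgroup $H^1_f(G_{\Qp},L(\mu))$ of classes whose extension is crystalline. One has $H^0(G_{\Qp},L(\mu))=0$ because $\mu$ is ramified, and $H^2(G_{\Qp},L(\mu))=0$ because, by Tate duality, it is dual to $H^0(G_{\Qp},L(\epsilon^{2-p}\chi_1^{-1}\chi_2))$, which vanishes since $\epsilon^{2-p}\chi_1^{-1}\chi_2$ is ramified for $p>2$. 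The local Euler characteristic formula then gives $\dim_L H^1(G_{\Qp},L(\mu))=1$, while the Bloch--Kato formula gives $\dim_L H^1_f(G_{\Qp},L(\mu))=\dim_L H^0(G_{\Qp},L(\mu))+\dim_L(D_{\mathrm{dR}}(L(\mu))/\mathrm{Fil}^0D_{\mathrm{dR}}(L(\mu)))=0+1=1$, the last summand being $1$ because $\mathrm{Fil}^0D_{\mathrm{dR}}(L(\mu))=0$, the Hodge--Tate weight $p-1$ of $\mu$ being positive. Hence $H^1=H^1_f$: every such extension is crystalline, in particular $\rho_\pp$ is, and by Lemma \ref{Lemma 2} this means $\pp\in\m-Spec(R_{\bar\rho,cris}^{\square,\psi}(0,p-2)[\frac1p])$.

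I expect the crux to be this last step --- crystallinity of an arbitrary extension of an unramified character by $\epsilon^{p-1}$ times an unramified character --- since the weight pair $\{0,p-1\}$ lies just outside the Fontaine--Laffaille range, so Fontaine--Laffaille theory cannot be invoked directly; it is the numerical coincidence $\dim_L H^1=\dim_L H^1_f=1$, which is exactly where $p>2$ and the precise value of the weight are used, that makes things work. The remaining points --- the bookkeeping between membership of $\pp$ in $\m-Spec$ of a quotient of $R_{\bar\rho}^{\square,\psi}$ and the corresponding factorisation of $\rho_\pp$, and the identity $\psi\epsilon(\gamma)=\epsilon^{p-1}(\gamma)$ --- are routine.
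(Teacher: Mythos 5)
Your proof is correct and structurally the same as the paper's: both reduce the statement, via Lemma \ref{Lemma 2} and Kisin's characterisation of $R_{\bar{\rho},cris}^{\square,\psi}(0,p-2)$, to showing that for $\pp\in\m-Spec(R_{\bar{\rho}}^{\square,\psi}[\frac{1}{p}])$ the representation $\rho_{\pp}$ is crystalline of Hodge--Tate weights $(0,p-1)$ if and only if it is reducible with a $G$-stable line on which $\gamma$ acts by $\epsilon^{p-1}(\gamma)$; the inclusion $\supseteq$ and the identification $\eta_1=\epsilon^{p-1}\chi_1$, $\eta_2=\chi_2$ via Lemma \ref{Lemma 1} are identical to the paper's. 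You diverge only at the step you yourself single out as the crux: crystallinity of an arbitrary extension of $\chi_2$ by $\epsilon^{p-1}\chi_1$. The paper disposes of this by citation --- the extension is semi-stable because the Hodge--Tate weight of the sub exceeds that of the quotient, and semi-stability forces crystallinity in this situation --- whereas you prove it directly by the Euler-characteristic/Bloch--Kato count $\dim_L H^1(G_{\Qp},L(\mu))=\dim_L H^1_f(G_{\Qp},L(\mu))=1$ for $\mu=\epsilon^{p-1}\chi_1\chi_2^{-1}$. That computation checks out: $\mu$ and $\epsilon\mu^{-1}=\epsilon^{2-p}\chi_1^{-1}\chi_2$ are both ramified precisely because $p>2$, which kills $H^0$ and $H^2$, the local Euler characteristic gives $\dim_L H^1=1$, and the positive weight gives $\mathrm{Fil}^0D_{\mathrm{dR}}(L(\mu))=0$, hence $\dim_L H^1_f=1$ as well. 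Your version is more self-contained and makes visible exactly where $p>2$ and the precise weight enter; the paper's is shorter but leans on two further references. Both arguments share the same unavoidable external input for the inclusion $\supseteq$, namely that a crystalline lift of a reducible $\bar{\rho}$ in these weights is itself reducible with the weight-$(p-1)$ character as the subobject, which you import from the introduction and the paper justifies by citation.
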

\begin{proof}
From \cite[Prop.3.5(i)]{MR2551764} we know that every crystalline lift $\rho_{\pp}$ of a reducible 2-dimensional representation $\bar{\rho},$ such that $\rho_{\pp}$ has Hodge-Tate-weights $(0,p-1),$ is reducible itself. Moreover, \cite[Thm. 8.3.5]{BC} says that if $\rho$ is a reducible 2-dimensional crystalline representation, then we have an exact sequence
\[
\begin{xy}
\xymatrix{
0\ar[r]&\epsilon^{p-1}\chi_1\ar[r]&\rho\ar[r]&\chi_2\ar[r]&0.
}
\end{xy}
\]
Thus $\rho_{\pp}(\gamma)$ acts on the invariant subspace as $\epsilon(\gamma)^{p-1}$ and hence from Lemma \ref{Lemma 2} it is clear that \[\m-Spec(R[\dfrac{1}{p}])\supset\m-Spec(R_{\bar{\rho}}^{\square,\psi}(0,p-2)[\dfrac{1}{p}]).\] For the other inclusion we note that it is also clear from Lemma \ref{Lemma 2} that any maximal ideal $\pp\in \m-Spec(R[\frac{1}{p}])$ gives rise to a reducible representation $\rho_{\pp}$ such that $\rho_{\pp}(\gamma)$ acts on the invariant subspace as $\epsilon(\gamma)^{p-1}$ and that the other eigenvalue of $\rho_{\pp}(\gamma)$ is $1.$ So we obtain with Lemma \ref{Lemma 1} that 
$\rho_{\pp}$ is an extension of two crystalline characters \[0\rightarrow \eta_1 \rightarrow * \rightarrow \eta_2 \rightarrow 0\] where the Hodge-Tate-weight of $\eta_1$ is equal to $p-1$ and the weight of $\eta_2$ is equal to $0.$ Then we can conclude from \cite[Prop. 1.28]{MR1263527} that it is semi-stable and from \cite[Thm. 8.3.5, Prop. 8.3.8]{BC} that it is crystalline and hence $\pp\in \m-Spec(R_{\bar{\rho}}^{\square,\psi}(0,p-2)[\frac{1}{p}]).$
\end{proof}
\begin{Remark}
We have the following identities mod $I_1$:
\begin{align}
x_{21}I_2=&(v+x_{11})I_4\label{eq:5}\\ 
(v-x_{11})I_2=&x_{12}I_4\label{eq:6}\\ 
x_{21}I_4=&(v+x_{11})I_3\label{eq:7}\\ 
(v-x_{11})I_4=&x_{12}I_3.\label{eq:8}
\end{align}
\end{Remark}

\section{Reducedness}
In order to show that $R_{\bar{\rho}}^{\square,\psi}(0,p-2)$ is equal to $R,$ it is enough to show that $R$ is reduced and $\OO$-torsion free, since then the assertion follows from Lemma \ref{Lemma 3}, as $R[\frac{1}{p}]$ is Jacobson because $R$ is a quotient of a formal power series ring over a complete discrete valuation ring.
\begin{Lemma}\label{Lemma 4}
If $\OO=W(k)$, then $R$ is an $W(k)$-torsion-free integral domain.
\end{Lemma}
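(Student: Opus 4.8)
The plan is to exhibit $R$ explicitly as a quotient of a polynomial ring by the ideal $(I_1,I_2,I_3,I_4)$ and to show this ideal is prime by a careful localization argument, following the case distinction of Remark \ref{Remark 1}. First I would record that $R = W(k)[[x_{11},\hat x_{12},x_{21},y_{11},\hat y_{12},y_{21}]]/(I_1,I_2,I_3,I_4)$; since $W(k)$-torsion-freeness of a Noetherian domain is automatic once we know $R$ is a domain and $p \neq 0$ in $R$, it suffices to prove $R$ is an integral domain and that $p$ is not nilpotent (the latter will follow, e.g., from producing one characteristic-zero point, i.e.\ a crystalline lift, which exists by Lemma \ref{Lemma 3} and the nonvanishing hypothesis, or more concretely from the explicit form of $I_1$). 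So the crux is primality of $(I_1,I_2,I_3,I_4)$.

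My approach to primality is to localize. By Remark \ref{Remark 1}, $v+x_{11}$, $v-x_{11}$, $x_{12}$, and $x_{21}$ cannot all lie in the maximal ideal in any of the three cases simultaneously in a way that obstructs the argument: in fact $v \in W(k)^\times$ since $v = \tfrac{1-\epsilon^{p-1}(\gamma)}{2}$ and $\epsilon^{p-1}(\gamma) = 1+p+\cdots \not\equiv 1 \pmod p$ forces $v$ to be a unit times $p$, hence at least $v$ is a nonzerodivisor; more usefully, on the locus where $v+x_{11}$ is invertible the relation $I_1=0$ lets us solve $x_{21} = \frac{(v+x_{11})(v-x_{11})}{x_{12}}$ once $x_{12}$ is also inverted, and then the identities \eqref{eq:5}--\eqref{eq:8} show that modulo $I_1$ the three polynomials $I_2,I_3,I_4$ are all unit multiples of a single one of them. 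Concretely, I would cover $\Spec R$ (or rather $\Spec$ of the polynomial ring) by the open sets $D(v+x_{11})$, $D(v-x_{11})$, $D(x_{12})$, $D(x_{21})$ — these cover because $I_1 = (v+x_{11})(v-x_{11})-x_{12}x_{21}$ and $v$ is a unit, so their vanishing would give $v^2 = 0$. On each chart, use $I_1$ together with the syzygies \eqref{eq:5}--\eqref{eq:8} to eliminate variables and identify the localized ring with a localization of a power series ring in fewer variables — manifestly a domain — and check the localized ideals patch up to $(I_1,I_2,I_3,I_4)$.

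The main obstacle I anticipate is the gluing/irreducibility step: showing not merely that each localization is a domain but that $\Spec R$ is irreducible, i.e.\ that the generic points of the charts coincide, so that $R$ itself (not just its localizations) is a domain. The syzygies \eqref{eq:5}--\eqref{eq:8} say that $I_2(v-x_{11}) = I_4 x_{12}$ etc.\ modulo $I_1$, which strongly suggests that $(I_1,I_2,I_3,I_4)$ is the ideal of $2\times 2$ minors of a suitable $2\times 3$ (or $2\times 2$-built-from-a-rank-condition) matrix — for instance the condition that the four vectors $v_1, v_2, \rho^{univ}(\delta)v_1, \rho^{univ}(\delta)v_2$ from the proof of Lemma \ref{Lemma 2} are pairwise proportional is a determinantal (rank $\leq 1$) condition, and determinantal ideals of generic-type matrices are prime by classical results. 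I would therefore try to rewrite $I_1,\dots,I_4$ as the $2\times 2$ minors of the matrix
\[
M = \begin{pmatrix} v+x_{11} & -x_{12} & (v+x_{11})y_{11}+x_{21}\cdot(?) \\ x_{21} & v-x_{11} & \cdots \end{pmatrix}
\]
or more cleanly as the condition that a certain $2\times 2$ matrix with entries linear in the $y$'s annihilates the fixed rank-one "flag" cut out by $I_1$; if such a determinantal description can be pinned down, primeness follows from the Eisenbud–Hochster–Huneke-type theorem that ideals of maximal minors of matrices of linear forms (over a regular ring, in the expected codimension) are prime, and $W(k)$-torsion-freeness is then immediate since the generic fibre is nonempty. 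If the clean determinantal description resists, the fallback is the brute-force chart-by-chart argument above, where the real work is verifying that on overlaps the reduced structures agree and that no embedded or extra component is hiding on the small locus $v+x_{11}=v-x_{11}=x_{12}=x_{21}=0$, which is empty precisely because $v$ is a unit.
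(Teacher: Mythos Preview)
Your covering/localization argument rests on the claim that $v$ is a unit in $W(k)$, and this is false. By the choice of $\gamma$ we have $\epsilon^{p-1}(\gamma)=1+p$, so $v=\tfrac{1-(1+p)}{2}=-\tfrac{p}{2}$; thus $v$ has $p$-adic valuation exactly $1$. (Your own sentence wavers between ``$v\in W(k)^\times$'' and ``$v$ is a unit times $p$''; only the latter is right.) Consequently, in the unramified case of Remark~\ref{Remark 1} all four elements $v+x_{11}$, $v-x_{11}$, $x_{12}$, $x_{21}$ lie in the maximal ideal of $R_{\bar\rho}^{\square,\psi}$, so the ``open charts'' $D(v\pm x_{11})$, $D(x_{12})$, $D(x_{21})$ miss the closed point and do not cover. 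Your fallback clause explicitly invokes emptiness of the locus $\{v+x_{11}=v-x_{11}=x_{12}=x_{21}=0\}$ ``precisely because $v$ is a unit''; since that locus is in fact nonempty, you have no mechanism left to rule out components or nonreducedness supported there, which is exactly the hard part.

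This is precisely the obstruction the paper's proof is designed to overcome. In the unramified case it passes to the associated graded ring with respect to $I=(\pi,x_{11},x_{12},x_{21})$; the hypothesis $\OO=W(k)$ is used to guarantee $v\in I\setminus I^2$, so that $I_1,\dots,I_4$ become \emph{homogeneous} of degree $2$ and $\gr_I R$ is the explicit quotient $k[[y_{11},\hat y_{12},y_{21}]][\bar\pi,\bar x_{11},\bar x_{12},\bar x_{21}]/(I_1,\dots,I_4)$. Inside this graded ring a localization argument (inverting $\bar x_{12}$) \emph{does} work, and the bulk of the paper's proof is the saturation computation showing $\ker\bigl(A\to A[\bar x_{12}^{-1}]/(\bar I_2)\bigr)=(\bar I_2,\bar I_3,\bar I_4)$, using the syzygies \eqref{eq:5}--\eqref{eq:8} systematically. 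That step is the substantive content you are missing. (Your determinantal idea is suggestive, but as stated it is only a hope: $I_1$ is itself a $2\times 2$ determinant and the others encode a rank condition \emph{along} that quadric, which is not a plain generic-determinantal situation, so quoting a standard primeness theorem would require real work to set up.)
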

\begin{proof}
We distinguish two cases.

If $\bar{\rho}$ is ramified, i.e. $x_{12}$ is invertible, we consider the fact that for every complete local ring $A$ with $a\in \mm_A,u\in A^{\times},$ there is a canonical isomorphism $A[[z]]/(uz-a)\cong A.$ Using this we see from \eqref{eq:1},\eqref{eq:2},\eqref{eq:6} and \eqref{eq:8} that
\begin{align*}
R=&\OO[[x_{11},\hat x_{12},x_{21},y_{11},\hat y_{12},y_{21}]]/(I_1,I_2)\\
\cong&\OO[[x_{11},\hat x_{12},y_{11},\hat y_{12}]],
\end{align*}
which shows the claim.

In the second case, where $\bar{\rho}$ is unramified, i.e. $x_{12}\notin R^{\times},$ we consider the ideal $I:=(\pi,x_{11},x_{12},x_{21})$ and have \[\mbox{gr}_IR_{\bar{\rho}}^{\square,\psi}\cong k[[y_{11},\hat y_{12},y_{21}]][\bar{\pi},\bar{x}_{11},\bar{x}_{12},\bar{x}_{21}].\] Since $\OO=W(k)$ we have $v\in I\setminus I^2$ and hence the elements $I_1,I_2,I_3,I_4$ are homogeneous of degree 2, so that \[\mbox{gr}_IR\cong k[[y_{11},\hat y_{12},y_{21}]][\bar{\pi},\bar{x}_{11},\bar{x}_{12},\bar{x}_{21}]/(I_1,I_2,I_3,I_4),\] see \cite[Ex. 5.3]{MR1322960}. Because $R$ is noetherian it follows from \cite[Cor. 5.5.]{MR1322960} that it is enough to show that $\mbox{gr}_IR$ is an integral domain.

We define \[A:=k[[y_{11},\hat y_{12},y_{21}]][\bar{x}_{11},\bar{x}_{12},\bar{x}_{21},\bar{\pi}]/(\bar{I_1})\] and look at the map
\[\phi\colon A\rightarrow A[\bar{x}_{12}^{-1}]/(\bar{I_2}).\]
The latter ring is isomorphic to $(k[[y_{11},\hat y_{12},y_{21}]][\bar{x}_{11},\bar{x}_{12},\bar{x}_{11}^{-1},\bar{\pi}]/(I_2))$ and since $I_2$ is irreducible it is an integral domain.
So we would be done by showing that $\ker(\phi)=(\bar{I_2},\bar{I_3},\bar{I_4}).$ The inclusion $(I_2,I_3,I_4)\subset\ker(\phi)$ is clear from \eqref{eq:6} and \eqref{eq:8}. For the other one we consider the fact that
\[\ker(\phi)=\{a\in A:\ \exists n\in \NN\cup\{0\},b,c,d\in A: \bar{x}_{12}^na=b\bar{I_2}+c\bar{I_3}+d\bar{I_4}\}.\] To show that $\ker(\phi)\subset (I_2,I_3,I_4),$ we let $a\in A$ and $n$ be minimal with the property that there exist $b,c,d\in A$ such that 
\begin{align}
\bar{x}_{12}^na=b\bar{I_2}+c\bar{I_3}+d\bar{I_4}.\label{eq:9}
\end{align}
If $n=0$ there is nothing to show. Now we assume that $n>0$ and consider the prime ideal $\pp:=(\bar{x}_{12},\bar{v}-\bar{x}_{11})\subset A$ and see that \[A/\pp\cong k[[y_{11},y_{12},y_{21}]][\bar{x}_{11},\bar{x}_{12}]\] is a UFD. We also observe that
\begin{align}
I_2&\equiv y_{12}(\bar{v}+\bar{x}_{11})^2\ \mbox{mod}\ \pp\label{eq:10}\\
I_3&\equiv y_{12}\bar{x}_{21}^2\ \mbox{mod}\ \pp\label{eq:11}\\
I_2&\equiv y_{12}(\bar{v}+\bar{x}_{11})\bar{x}_{21}\ \mbox{mod}\ \pp.\label{eq:12}
\end{align}
Modulo $\pp$ \eqref{eq:9} becomes 
\begin{align}
0\equiv y_{12}b(\bar{v}+\bar{x}_{11})^2+y_{12}c\bar{x}_{21}^2+y_{12}d(\bar{v}+\bar{x}_{11})\bar{x}_{21}.\label{eq:13}
\end{align}
Since $A/\pp$ is a UFD there are $b_1,c_1\in A$ such that 
\begin{align}
y_{12}b&\equiv b_1\bar{x}_{21}\ \mbox{mod}\ \pp\label{eq:14}\\
y_{12}c&\equiv c_1(\bar{v}+\bar{x}_{11})\ \mbox{mod}\ \pp \label{eq:15}
\end{align}
and we see that 
\begin{align}
d\equiv-\frac{b_1\bar{x}_{21}+c_1(\bar{v}+\bar{x}_{11})}{2}\ \mbox{mod}\ \pp.\label{eq:16}
\end{align}
Hence we can find
$b_2, b_3, c_2, c_3, d_1, d_2\in A$ such that
\begin{align*}
b&=b_1\bar{x}_{21}+b_2\bar{x}_{12}+b_3(\bar{v}-\bar{x}_{11})\\
c&=c_1(\bar{v}+\bar{x}_{11})+c_2\bar{x}_{12}+c_3(\bar{v}-\bar{x}_{11})\\
d&=-\frac{b_1\bar{x}_{21}+c_1(\bar{v}+\bar{x}_{11})}{2}+d_1\bar{x}_{12}+d_2(\bar{v}-\bar{x}_{11}).
\end{align*}
Substituting this in \eqref{eq:9} we get
\begin{align}
\bar{x}_{12}^na=&b\bar{I_2}+c\bar{I_3}+d\bar{I_4}\\
\begin{split}
=&\bar{x}_{12}(b_2I_2+b_3I_4+c_2I_3+d_1I_4+d_2I_3)\\
&+\frac{1}{2}(b_1(\bar{v}+\bar{x}_{11})+c_1\bar{x}_{21})I_4+(\bar{v}-\bar{x}_{11})c_3I_3.\label{eq:17}
\end{split}
\end{align}
Modulo $\pp$ we have $b_1(\bar{v}+\bar{x}_{11})+c_1\bar{x}_{21}\equiv 0$ and hence there are $b_4,b_5,b_6,c_4,c_5,c_6$ with
\begin{align}
b_1&=\bar{x}_{21}b_4+\bar{x}_{12}b_5+(\bar{v}-\bar{x}_{11})b_6\\
c_1&=(\bar{v}+\bar{x}_{11})c_4+\bar{x}_{12}c_5+(\bar{v}-\bar{x}_{11})c_6.
\end{align}
Hence we can rewrite \eqref{eq:17} to
\begin{align}
\bar{x}_{12}^na=\bar{x}_{12}z+\frac{1}{2}(b_4+c_4)(\bar{v}+\bar{x}_{11})^2I_3+(\bar{v}-\bar{x}_{11})c_3I_3\label{eq:18}
\end{align}
for a certain $z\in (I_2,I_3,I_4).$
So with \eqref{eq:18} we see that $b_4+c_4\equiv 0$ modulo $\pp$ and $c_3\equiv0$ modulo the prime ideal $\pp':=(\bar{x}_{12},\bar{v}+\bar{x}_{11}).$ Therefore we can find some $c_7,c_8,e_1,e_2\in A$ with
\begin{align*}
c_3&=c_7\bar{x}_{12}+c_8(\bar{v}+\bar{x}_{11})\\
b_4+c_4&=e_1\bar{x}_{12}+e_2(\bar{v}-\bar{x}_{11}).
\end{align*}
But since we have $(v+x_{11})(v-x_{11})=x_{12}x_{21}$ in $A$ we can finally transform \eqref{eq:18} to
\[\bar{x}_{12}^na=\bar{x}_{12}z'\]
for some $z'\in (I_2,I_3,I_4)$ which shows that $\bar{x}_{12}^{n-1}a\in(I_2,I_3,I_4),$ since $A$ is an integral domain. But this a contradiction to the minimality of $n.$
\end{proof}

\begin{Proposition}
$R$ is reduced and $\OO$-torsion free for any choice of $\OO.$
\end{Proposition}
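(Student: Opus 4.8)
The plan is to reduce to Lemma~\ref{Lemma 4} by a flat base change from $W(k)$ to $\OO$. The key point is that the presentation of $R$ is already defined over $W(k)$: one has $v=\tfrac{1-\epsilon^{p-1}(\gamma)}{2}=-\tfrac p2\in\ZZ_p$ (because $\bar\gamma\mapsto(1+p,0)$ and $p>2$), and $[\phi(\gamma)],[\phi(\delta)]$ are Teichm\"uller lifts of elements of $k$ and hence lie in $W(k)$; so all coefficients of $I_1,\dots,I_4$ lie in $W(k)$. Since $R^{\square,\psi}_{\bar\rho}\cong\OO[[x_{11},\hat x_{12},x_{21},y_{11},\hat y_{12},y_{21}]]$ as an abstract ring — the dependence on $\psi$ having been entirely absorbed when $t_\gamma,t_\delta$ were eliminated — and $\OO$ is finite free over $W(k)$ (a $W(k)$-basis $1,\pi,\dots,\pi^{e-1}$ of $\OO$ is simultaneously a $W(k)[[\underline x]]$-basis of $\OO[[\underline x]]$), we get $R^{\square,\psi}_{\bar\rho}\cong W(k)[[\underline x]]\otimes_{W(k)}\OO$. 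Writing $R_0$ for the ring analogous to $R$ but over $W(k)$, that is $R_0:=W(k)[[\underline x]]/(I_1,I_2,I_3,I_4)$, this yields $R\cong R_0\otimes_{W(k)}\OO$. By Lemma~\ref{Lemma 4}, $R_0$ is a $W(k)$-torsion free integral domain, in particular flat over the discrete valuation ring $W(k)$.

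Granting this, $\OO$-torsion freeness is immediate: base changing a $W(k)$-flat ring along $W(k)\to\OO$ yields an $\OO$-flat ring, and a flat module over the discrete valuation ring $\OO$ is torsion free.

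For reducedness I would use the $\OO$-torsion freeness just obtained, so that $R\hookrightarrow R[\tfrac1p]$, and it suffices to show $R[\tfrac1p]$ is reduced. Now $R[\tfrac1p]\cong R_0\otimes_{W(k)}E\cong R_0[\tfrac1p]\otimes_{W(k)[1/p]}E$, where $R_0[\tfrac1p]$ is a localization of the domain $R_0$, hence a domain whose fraction field $\mathrm{Frac}(R_0)$ contains $W(k)[\tfrac1p]$ (as $p$ is already invertible there). Choosing a primitive element $E=W(k)[\tfrac1p][\alpha]$ with minimal polynomial $f$ over $W(k)[\tfrac1p]$, which is separable since we are in characteristic $0$, flatness of $E$ over $W(k)[\tfrac1p]$ gives an embedding $R_0[\tfrac1p]\otimes_{W(k)[1/p]}E\hookrightarrow\mathrm{Frac}(R_0)\otimes_{W(k)[1/p]}E\cong\mathrm{Frac}(R_0)[\alpha]/(f)$. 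Separability of $f$ persists over the overfield $\mathrm{Frac}(R_0)$, so $\mathrm{Frac}(R_0)[\alpha]/(f)$ is a finite product of fields and in particular reduced; hence so are its subring $R[\tfrac1p]$ and, finally, $R$.

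The substantive work has already been carried out in Lemma~\ref{Lemma 4}; the only thing to be careful about here is the bookkeeping that legitimizes the base change, in particular the claim that forming $R^{\square,\psi}_{\bar\rho}$ and then $R$ introduces no coefficients outside $W(k)$ (equivalently, that the $\psi$-dependence is wholly absorbed in the elimination of $t_\gamma,t_\delta$). I would also note that in the ramified case the base change can be bypassed altogether: the argument in the proof of Lemma~\ref{Lemma 4} works over any $\OO$ and exhibits $R\cong\OO[[x_{11},\hat x_{12},y_{11},\hat y_{12}]]$ as a regular domain, so it is really only the two unramified cases for which one invokes Lemma~\ref{Lemma 4} together with the argument above.
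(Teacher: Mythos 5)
Your proposal is correct and follows essentially the same route as the paper: identify $R$ with $R_0\otimes_{W(k)}\OO$ for the $W(k)$-model $R_0$ (the paper's $S$), deduce $\OO$-torsion-freeness from Lemma~\ref{Lemma 4}, and obtain reducedness by embedding $R[\frac{1}{p}]$ into $\Quot(R_0)\otimes_{W(k)[\frac{1}{p}]}\OO[\frac{1}{p}]$, which is a finite product of fields by separability. If anything, you are more explicit than the paper about the bookkeeping that justifies the base change, namely that $v=-\frac{p}{2}$ and the Teichm\"uller lifts put all coefficients of $I_1,\dots,I_4$ in $W(k)$ and that the $\psi$-dependence is absorbed in eliminating $t_\gamma,t_\delta$.
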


\begin{proof}
Since $\OO$ is flat over $W(k)$ and we have seen in Lemma \ref{Lemma 3} that \[S:=W(k)[[x_{11},\hat x_{12},x_{21},y_{11},\hat y_{12},y_{21}]]/(I_1,I_2,I_3,I_4)\] is an integral domain, we get an injection \[\OO\otimes_{W(k)}S\rightarrow \OO\otimes_{W(k)}\Quot(S).\] As $S$ is $W(k)$-torsion-free by Lemma \ref{Lemma 3}, we obtain an isomorphism \[\OO\otimes_{W(k)}\Quot(S)\xrightarrow{\cong} \OO[\dfrac{1}{p}]\otimes_{W(k)[\frac{1}{p}]}\Quot(S).\] Since $\OO[\frac{1}{p}]$ is a separable field extension of $W(k)[\frac{1}{p}],$ we deduce that $\OO[\frac{1}{p}]\otimes_{W(k)[\frac{1}{p}]}\Quot(S)$ is reduced and $\OO$-torsion free.
\end{proof}

\section{The Multiplicity}\label{sec:Mult}
We want to compute the Hilbert-Samuel-Multiplicity of the ring $R/\pi$ for the given representation 

$$\bar{\rho}\colon G_{\Qp} \rightarrow \GL_2(k), \quad g \mapsto \begin{pmatrix} 1 & \phi(g) \\ 0 & 1 \end{pmatrix}.$$
We denote the maximal ideal of $R/\pi$ by $\mm.$

\begin{Theorem}\label{Theorem 2}
\begin{align*}
e(R/\pi)=\left\{
\begin{array}{l}
1,\ \mbox{if}\ \bar{\rho}\ \mbox{is ramified}\\ 
2,\ \mbox{if}\ \bar{\rho}\ \mbox{is unramified, indecomposable}\\
4,\ \mbox{if}\ \bar{\rho}\ \mbox{is split}
\end{array}
\right.
\end{align*}
\end{Theorem}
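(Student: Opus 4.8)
The plan is to run the computation directly on the explicit presentation. By the Proposition, $\pi$ is a non-zerodivisor on $R=R_{\bar\rho}^{\square,\psi}/(I_1,\dots,I_4)$, so $R/\pi=(R_{\bar\rho}^{\square,\psi}/\pi)/(\bar I_1,\dots,\bar I_4)$ with $P:=R_{\bar\rho}^{\square,\psi}/\pi=k[[x_{11},\hat x_{12},x_{21},y_{11},\hat y_{12},y_{21}]]$. Since $v=\tfrac{1-\epsilon^{p-1}(\gamma)}2=-\tfrac p2\in\pi\OO$, reducing mod $\pi$ simply sets $v=0$ in $I_1,\dots,I_4$; writing $x_{12},y_{12}$ for the images of the corresponding elements, a short computation (the manipulations behind \eqref{eq:5}--\eqref{eq:8}) gives in $P$
\[\bar I_1=-q,\quad \bar I_2=-x_{12}\ell+y_{12}q,\quad \bar I_3=x_{21}\ell-y_{21}q,\quad \bar I_4=x_{11}\ell-2y_{11}q,\]
where $q:=x_{11}^2+x_{12}x_{21}$ and $\ell:=x_{21}y_{12}+2x_{11}y_{11}+x_{12}y_{21}$. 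Hence $R/\pi\cong P/(q,x_{11}\ell,x_{12}\ell,x_{21}\ell)$.

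Next I would cut the problem down to a single prime. From $\ell^2=y_{12}(x_{21}\ell)+2y_{11}(x_{11}\ell)+y_{21}(x_{12}\ell)$ one gets $\ell\in\sqrt{(q,x_{11}\ell,x_{12}\ell,x_{21}\ell)}$, so once $(q,\ell)$ is known to be prime in $P$ we have $(R/\pi)_{\mathrm{red}}\cong P/(q,\ell)$ and the support of $R/\pi$ is the irreducible set $V(q,\ell)$, of dimension $4$. As $x_{11}\notin(q,\ell)$ (for degree/order reasons), localising at the unique minimal prime $\pp:=(q,\ell)$ makes $x_{11}$ a unit, so $x_{11}\ell=0$ forces $\ell=0$ and $(R/\pi)_\pp=(P/(q,\ell))_\pp$ is a field; thus $R/\pi$ is generically reduced (and geometrically irreducible, once $(q,\ell)$ is seen to stay prime over $\bar k$). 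The associativity formula for Hilbert--Samuel multiplicities then collapses to
\[e(R/\pi)=\mathrm{length}_{(R/\pi)_\pp}\big((R/\pi)_\pp\big)\cdot e\big(P/(q,\ell)\big)=e\big(P/(q,\ell)\big),\]
and it remains to evaluate the right-hand side in the three cases of Remark \ref{Remark 1}.

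If $\bar\rho$ is ramified then $x_{12}\in P^\times$, so $q$ is linear in $x_{21}$ with unit coefficient; solving $x_{21}=-x_{12}^{-1}x_{11}^2$, the form $\ell$ becomes linear in $y_{21}$ with unit coefficient $x_{12}$, and solving for $y_{21}$ as well gives $P/(q,\ell)\cong k[[x_{11},\hat x_{12},y_{11},\hat y_{12}]]$, so $e=1$ (this also follows from Lemma \ref{Lemma 4}). If $\bar\rho$ is unramified then $x_{12}\in\mm$; if moreover it is indecomposable, $y_{12}\in P^\times$, so $\ell$ is linear in $x_{21}$ with unit coefficient, and solving for $x_{21}$ gives $P/(q,\ell)\cong k[[x_{11},\hat x_{12},y_{11},\hat y_{12},y_{21}]]/(\tilde q)$ with $\tilde q=x_{11}^2-y_{12}^{-1}(2x_{11}x_{12}y_{11}+x_{12}^2y_{21})$ of order $2$, whence $e=\mathrm{ord}(\tilde q)=2$. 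If $\bar\rho$ is split then $x_{12},y_{12}\in\mm$, so $q$ and $\ell$ are homogeneous of degree $2$ and form a regular sequence in $k[x_{11},\dots,y_{21}]$; therefore $P/(q,\ell)$ is a graded complete intersection with Hilbert series $(1+t)^2/(1-t)^4$, so $e=4$.

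The parts that demand real care are the primality of $(q,\ell)$ in $P$ and the fact that $R/\pi$ is unmixed of dimension $4$ with $(q,\ell)$ its only minimal prime, so that the associativity formula has a single term. Primality is immediate in the ramified case; in the indecomposable case it comes down to $\tilde q$ being irreducible, which follows from a discriminant computation — $\tilde q$ is a monic quadratic in $x_{11}$ with discriminant $4y_{12}^{-2}x_{12}^2(y_{11}^2+y_{12}y_{21})$, and $y_{11}^2+y_{12}y_{21}$ has odd order, so is not a square; in the split case one uses that the quadratic form $q$ has rank $3$ (hence $P/(q)$ is a normal domain) together with the fact that $x_{21}$ is a non-zerodivisor modulo $(q,\ell)$. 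The split case is the genuinely new feature: there $(q,\ell)$ is an honest codimension-$2$ complete intersection of two quadrics rather than essentially a hypersurface, and this is exactly what produces the multiplicity $4$.
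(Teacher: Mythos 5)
Your proposal is correct, and it follows the same skeleton as the paper — extract the form $J=\ell=x_{21}y_{12}+2x_{11}y_{11}+x_{12}y_{21}$ so that modulo $\pi$ and $I_1$ the relations $I_2,I_3,I_4$ become $x_{12}\ell,x_{21}\ell,x_{11}\ell$ (you have silently corrected the typo in the displayed $I_3$, whose middle term must be $x_{21}$, consistently with the paper's own \eqref{eq:J3}), then reduce to computing $e$ of the complete intersection cut out by $I_1$ and $\ell$. The two steps are justified differently, though. For the reduction, the paper uses the short exact sequence $0\to (R/\pi)/\mathrm{Ann}(J)\to R/\pi\to R/(\pi,J)\to 0$ together with additivity of $e$ in top dimension; this needs only the cheap observation that $x_{11},x_{12},x_{21}$ annihilate $J$, so the kernel has dimension at most $3<4$. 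You instead invoke the associativity formula, which forces you to prove up front that $(q,\ell)$ is prime and that the localization at it has length one — strictly more work, but it delivers the paper's separate Proposition (geometric irreducibility and generic reducedness, via Lemma \ref{Lemma 5}) as a by-product rather than as an afterthought. For the evaluation, the paper builds explicit parameter ideals $\mathfrak q$ with $\mathfrak q\mm^n=\mm^{n+1}$ and computes lengths using that $R/(\pi,J)$ is Cohen--Macaulay; you use that a hypersurface has multiplicity equal to the order of its equation (indecomposable case, order $2$) and the Hilbert series $(1+t)^2/(1-t)^4$ of a graded complete intersection of two quadrics (split case), which is arguably the cleaner explanation of where the $4$ comes from. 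The one place your write-up is thin is the primality of $(q,\ell)$ in the split case, which your route genuinely needs (the paper's route does not): your sketch — normality of $P/(q)$ for the rank-$3$ quadric, then localizing at a variable and checking that variable is a non-zerodivisor modulo $(q,\ell)$ — is exactly the argument of Lemma \ref{Lemma 5}, so it goes through, but as written it presupposes the non-zerodivisor claim rather than proving it (e.g.\ by noting $A/(\ell)$ is Cohen--Macaulay, hence unmixed, and $\dim A/(\ell,x_{21})=3<4$).
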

\begin{proof}
If we set $J:=y_{12}x_{21}+2x_{11}y_{11}+x_{12}y_{21}$ we obtain modulo $\pi$ the relations
\begin{align}
I_2\equiv&x_{12}J \label{eq:J2}\\
I_3\equiv&x_{21}J \label{eq:J3}\\
I_4\equiv&x_{11}J.\label{eq:J4}
\end{align}
We split the proof into $3$ cases as in Remark \ref{Remark 1}.
If $\bar{\rho}$ is ramified, i.e. $x_{12}$ is invertible, we see as in the proof of Lemma \ref{Lemma 4} that 
\begin{align*}
R/\pi&\cong k[[x_{11},\hat x_{12},x_{21},y_{11},\hat y_{12},y_{21}]]/(x_{11}^2-x_{12}x_{21},J)\\
&\cong k[[x_{11},\hat x_{12},y_{11},\hat y_{12}]].
\end{align*}
Hence it is a regular local ring and therefore $e(R/\pi)=1.$

Let us assume in the following that $\bar{\rho}$ is unramified, i.e. $x_{12}=\hat x_{12}\in \mm_R$, and we can consider the exact sequence
\begin{align}
0\rightarrow (R/\pi)/\mbox{Ann}_{R/\pi}(J)\rightarrow R/\pi\rightarrow R/(\pi,J)\rightarrow 0.\label{eq:s1}
\end{align}
Since $x_{11},x_{12},x_{21}\in \mbox{Ann}_{R/\pi}(J),$ see \eqref{eq:J2}-\eqref{eq:J4}, we have $\dim ((R/\pi)/\mbox{Ann}_{R/\pi}(J))\le 3.$ But $\dim R/\pi=4$ so that \eqref{eq:s1} gives us $e(R/\pi)=e(R/(\pi,J)),$ see \cite[Thm. 14.6]{MR1011461}.
We obtain that
\begin{align*}
R/(\pi,J)\cong& k[[x_{11},x_{12},x_{21},y_{11},\hat y_{12},y_{21}]]/(x_{11}^2-x_{12}x_{21},J)\\
\cong& (k[[x_{11},x_{12},x_{21}]]/(x_{11}^2-x_{12}x_{21}))[[y_{11},\hat y_{12},y_{21}]]/(J)
\end{align*}
 is a complete intersection of dimension 4. So if $\mathfrak{q}\subset R/(\pi,J)$ is an ideal generated by 4 elements, such that $R/(\pi,J,\mathfrak{q})$ has finite length as a $R/(\pi,J)$-module, then these elements form a regular sequence in $R/(\pi,J)$ and $e_{\mathfrak{q}}(R/(\pi,J))=l(R/(\pi,J,\mathfrak{q})),$ see \cite[Thm. 17.11]{MR1011461}. Besides, if there exists an integer $n$ such that $\mathfrak{q}\mm^n=\mm^{n+1},$ then $e(R/(\pi,J))=e_{\mathfrak{q}}(R/(\pi,J)),$ see \cite[Thm. 14.13]{MR1011461}. So to finish the proof it would suffice to find such an ideal $\mathfrak{q}.$

If $\bar{\rho}$ is indecomposable, i.e. $\phi(\delta)$ is non-zero and therefore $y_{12}$ is a unit in $R,$ we can write the equation $J=0$ as \[x_{21}=-y_{12}^{-1}(2x_{11}y_{11}+y_{21}x_{12})\] and $I_1=0$ as \[x_{11}^2=x_{12}y_{12}^{-1}(2x_{11}y_{11}+y_{21}x_{12})\] so that \[R/(\pi,J)\cong k[[x_{11},x_{12},y_{11},\hat y_{12},y_{21}]]/(x_{11}^2+x_{12}y_{12}^{-1}(2x_{11}y_{11}+y_{21}x_{12})).\] Hence it is clear that $x_{12},x_{21},y_{11},\hat y_{12}$ is a system of parameters for $R/(\pi,J)$ that generates an ideal $\mathfrak{q}$ with  $\mathfrak{q}\mm=\mm^2.$ So we obtain
\[e_{\mathfrak{q}}(R/(\pi,J))=l(R/(\pi,J,\mathfrak{q}))=l(k[[x_{11}]]/(x_{11}^2))=2\]
and hence $e(R/\pi)=2.$

If $\bar{\rho}$ is split, which is equivalent to $x_{12},y_{12}\notin R^{\times},$ we take $\mathfrak{q}:=(x_{12}-x_{21},x_{12}-y_{12},x_{12}-y_{21},y_{11})$
and claim that $\mathfrak{q}\mm^2=\mm^3.$ If we write $\mm=(x_{12}-x_{21},x_{12}-y_{12},x_{12}-y_{21},y_{11},x_{11},x_{12})$ we just have to check that $x_{11}^3,x_{11}^2x_{12},x_{11}x_{12}^2,x_{12}^3\in \mathfrak{q}\mm^2.$
Therefore it is enough to see that
\begin{align*}
x_{11}^2=& -x_{11}y_{11}+\frac{1}{2}(x_{12}-y_{12})x_{21}+\frac{1}{2}(x_{21}-y_{21})x_{12}\ \in \mm \mathfrak{q}\\
x_{12}^2=& x_{11}^2+x_{12}(x_{12}-x_{21})\ \in \mm \mathfrak{q}.
\end{align*}
Hence \[e(R/\pi)=l(R/(\pi,J,\mathfrak{q}))=l(k[[x_{11},x_{12}]]/(x_{11}^2,x_{12}^2))=4.\]
\end{proof}

\begin{Corollary}
If $\bar{\rho}$ is unramified, then the ring $R$ is not Cohen-Macaulay.
\end{Corollary}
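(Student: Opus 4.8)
The plan is to show that $R$ fails to be Cohen–Macaulay by exhibiting a system of parameters that is not a regular sequence, equivalently (and more cleanly) by comparing the Hilbert–Samuel multiplicity $e(R/\pi)$ computed in Theorem \ref{Theorem 2} with the multiplicity one would be forced to have if $R/\pi$ were Cohen–Macaulay and reduced at the generic points. Recall that $R$ is $\OO$-flat with $R/\pi$ of dimension $4$, and $R$ is a quotient of a $5$-variable (after eliminating $t_\gamma,t_\delta$, six-variable over $\OO$) power series ring; so $R$ is Cohen–Macaulay if and only if $R/\pi$ is Cohen–Macaulay, since $\pi$ is a nonzerodivisor on $R$ and $R$ is local. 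Thus it suffices to prove that $R/\pi$ is not Cohen–Macaulay when $\bar\rho$ is unramified.

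First I would recall from the proof of Theorem \ref{Theorem 2} that in the unramified case we have the short exact sequence \eqref{eq:s1}
\[
0\to (R/\pi)/\mathrm{Ann}_{R/\pi}(J)\to R/\pi\to R/(\pi,J)\to 0,
\]
that $R/(\pi,J)$ is a $4$-dimensional complete intersection (hence Cohen–Macaulay, in particular equidimensional and unmixed), and that $(R/\pi)/\mathrm{Ann}_{R/\pi}(J)$ has dimension $\le 3$ because it is killed by $x_{11},x_{12},x_{21}$. The key structural point is that $R/\pi$ therefore has an embedded or lower-dimensional piece: more precisely, the submodule $N:=\mathrm{Ann}_{R/\pi}(J)\cdot$ — no, rather the ideal $\mathrm{Ann}_{R/\pi}(J)$ is nonzero (it contains $x_{11},x_{12},x_{21}$, none of which vanish in $R/\pi$) and $R/\mathrm{Ann}$ has dimension strictly less than $\dim R/\pi$. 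Concretely, the submodule $J\cdot(R/\pi)\cong (R/\pi)/\mathrm{Ann}_{R/\pi}(J)$ sitting inside $R/\pi$ has dimension $\le 3 < 4$. A Cohen–Macaulay local ring has no nonzero submodule of dimension less than its own dimension — equivalently, $\mathrm{depth}$ considerations force all associated primes of a Cohen–Macaulay ring to have the same dimension (Cohen–Macaulay rings are unmixed). So I would argue: if $R/\pi$ were Cohen–Macaulay, then $\mathrm{Ass}(R/\pi)$ would consist only of minimal primes of dimension $4$; but the ideal $(0:_{R/\pi}J)$ being a proper ideal with $\dim (R/\pi)/(0:J)\le 3$ means the submodule $J(R/\pi)$ is a nonzero module of dimension $\le 3$, hence any associated prime of $R/\pi$ contained in its support has dimension $\le 3$, contradicting unmixedness — provided $J(R/\pi)\ne 0$, i.e. provided $J$ is not a zerodivisor-free situation. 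Here I must check $J(R/\pi)\neq 0$, i.e. $J\notin$ the zero ideal of $R/\pi$; but if $J(R/\pi)=0$ then $R/\pi=R/(\pi,J)$ would be a complete intersection and its multiplicity would be computed directly — and in fact we would need to rule this out, which follows because $I_2\equiv x_{12}J$ is a nonzero element of the $5$-variable power series ring and $x_{12}$ is a nonzerodivisor there, so $J\ne 0$ already in $R_{\bar\rho}^{\square,\psi}/\pi$ before imposing relations, and one checks $J\notin (I_1, x_{12}J, x_{21}J, x_{11}J)$ — alternatively, and more robustly, one observes directly that $e(R/\pi)\in\{2,4\}$ while $e(R/(\pi,J))$ would have to equal $e(R/\pi)$ only because the other piece has smaller dimension, so the two pieces genuinely coexist.

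The cleanest route, which I would actually write up, is the multiplicity/unmixedness argument phrased via \cite[Thm. 14.7]{MR1011461} or the exact-sequence additivity of multiplicity already invoked in the proof of Theorem \ref{Theorem 2}: the equality $e(R/\pi)=e(R/(\pi,J))$ used there relies precisely on $\dim\big((R/\pi)/\mathrm{Ann}_{R/\pi}(J)\big)<\dim(R/\pi)$, which exhibits $(R/\pi)/\mathrm{Ann}_{R/\pi}(J)$ as a nonzero (it is a quotient by a proper ideal, and it surjects onto $(R/\pi)/\mm = k$) module of dimension $3$. Since $R/(\pi,J)$ is a $4$-dimensional complete intersection, it is Cohen–Macaulay, hence $(S_2)$ and unmixed. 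If $R/\pi$ were Cohen–Macaulay it would satisfy Serre's condition $(S_1)$, i.e. it would have no embedded primes, and moreover all its associated primes would be minimal of dimension $4$. But the nonzero ideal $\mathrm{Ann}_{R/\pi}(J)$ cuts out a closed subscheme of dimension $\le 3$, so $R/\pi$ has an associated prime $\pp$ with $J\notin\pp$ contributing the $4$-dimensional part and, separately, the support of the submodule $J\cdot(R/\pi)$ forces an associated prime of dimension $\le 3$: indeed $J\cdot(R/\pi)\cong (R/\pi)/\mathrm{Ann}_{R/\pi}(J)$ is a nonzero submodule of $R/\pi$ all of whose associated primes have dimension $\le 3$, and associated primes of a submodule are associated primes of the module. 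Hence $R/\pi$ has an associated prime of dimension $\le 3 <4=\dim R/\pi$, so $R/\pi$ is not unmixed, hence not Cohen–Macaulay (using e.g. \cite[Thm. 17.3]{MR1011461} or the $(S_1)$ characterization). Finally, since $\pi$ is $R$-regular and $R/\pi$ is not Cohen–Macaulay, $R$ is not Cohen–Macaulay either. The one point that needs genuine care — the main obstacle — is verifying that $J\cdot(R/\pi)\neq 0$, equivalently that $\mathrm{Ann}_{R/\pi}(J)$ is a proper ideal and that $R/\pi\ne R/(\pi,J)$; I would dispatch this by noting that if $J$ were zero in $R/\pi$, then from the relations $I_2\equiv x_{12}J$, $I_3\equiv x_{21}J$, $I_4\equiv x_{11}J$ the ring $R/\pi$ would already equal $k[[x_{11},\hat x_{12},x_{21},y_{11},\hat y_{12},y_{21}]]/(x_{11}^2-x_{12}x_{21})$, a $5$-dimensional ring, contradicting $\dim R/\pi=4$ (which was established in the proof of Theorem \ref{Theorem 2}). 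This contradiction shows $J\ne 0$ in $R/\pi$, and the argument above then applies in both the indecomposable and the split cases uniformly.
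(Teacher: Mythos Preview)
Your approach is the same as the paper's: reduce to $R/\pi$ via the regularity of $\pi$, then observe that $J\cdot(R/\pi)\cong (R/\pi)/\mathrm{Ann}_{R/\pi}(J)$ is a nonzero submodule of dimension $\le 3<4=\dim R/\pi$, so $R/\pi$ has an associated prime of dimension $<\dim R/\pi$ and hence is not Cohen--Macaulay. The paper phrases this last step via \cite[Thm.~2.1.2(a)]{MR1251956}; your associated-prime/unmixedness formulation is equivalent.

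There is one genuine error, in your verification that $J\ne 0$ in $R/\pi$. You assert that if $J=0$ in $R/\pi$ then, from $I_2\equiv x_{12}J$, $I_3\equiv x_{21}J$, $I_4\equiv x_{11}J$, the ring $R/\pi$ would equal $k[[x_{11},\hat x_{12},x_{21},y_{11},\hat y_{12},y_{21}]]/(x_{11}^2-x_{12}x_{21})$, a $5$-dimensional ring. This implication is false: ``$J=0$ in $R/\pi$'' means $J$ lies in the ideal $(I_1,I_2,I_3,I_4)$ of the power series ring $P$, not that $I_2,I_3,I_4$ vanish in $P$; the defining ideal of $R/\pi$ does not collapse to $(I_1)$ under this hypothesis. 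A correct argument runs as follows. Suppose $J\in(I_1,x_{12}J,x_{21}J,x_{11}J)$ in $P$. Writing $J=aI_1+bx_{12}J+cx_{21}J+dx_{11}J$ and using that $1-bx_{12}-cx_{21}-dx_{11}$ is a unit (as $x_{11},x_{12},x_{21}\in\mm_P$ in the unramified case), one gets $J=a'I_1$ for some $a'\in P$. But modulo $\pi$ we have $I_1=-(x_{11}^2+x_{12}x_{21})$, so every monomial in $a'I_1$ is divisible by $x_{11}^2$ or by $x_{12}x_{21}$, whereas $J$ contains the monomial $2x_{11}y_{11}$; contradiction. (The paper simply asserts the submodule is nonzero without spelling this out, so it is to your credit that you flagged the point---but the argument you gave for it does not work.)
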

\begin{proof}
Since $R$ is $\OO$-torsion free, $\pi$ is $R$-regular and hence $R$ is CM if and only if $R/\pi$ is CM. In \eqref{eq:s1} we have constructed a non-zero submodule of $R/\pi$ of dimension strictly less than the dimension of $R/\pi$. It follows from \cite[Thm. 2.1.2(a)]{MR1251956} that $R/\pi$ cannot be CM.
\end{proof}

\begin{Proposition}
Spec$(R/\pi)$ is geometrically irreducible and generically reduced.
\end{Proposition}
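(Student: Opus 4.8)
The plan is to read off $\Spec(R/\pi)$ from the explicit presentations obtained in the proof of Theorem \ref{Theorem 2} and to reduce all three cases to the behaviour of a single hypersurface. If $\bar\rho$ is ramified then $R/\pi\cong k[[x_{11},\hat x_{12},y_{11},\hat y_{12}]]$ is a formal power series ring, hence geometrically integral, so it is geometrically irreducible and (generically) reduced and we are done; assume henceforth that $\bar\rho$ is unramified, so that $x_{12}=\hat x_{12}\in\mm$.

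Put $T:=R_{\bar{\rho}}^{\square,\psi}/(\pi,I_1)$, a power series ring in six variables over $k$ modulo the irreducible (non-degenerate) quadratic form $x_{11}^2-x_{12}x_{21}$; so $T$ is a $5$-dimensional catenary domain. By \eqref{eq:J2}--\eqref{eq:J4} we have $R/\pi\cong T/(x_{11}J,x_{12}J,x_{21}J)$, where $J=y_{12}x_{21}+2x_{11}y_{11}+x_{12}y_{21}$; and since $J\in(x_{11},x_{12},x_{21})T$ we get $J^2\in(x_{11}J,x_{12}J,x_{21}J)$, so $\Spec(R/\pi)$ is homeomorphic to $V(J)\subseteq\Spec(T)$. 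Inverting $x_{12}$ and using $I_1$ to eliminate $x_{21}$ gives $T[x_{12}^{-1}]\cong k[[x_{11},x_{12},y_{11},\hat y_{12},y_{21}]][x_{12}^{-1}]$, under which $(J)=(h)$ with $h:=y_{12}x_{11}^2+2x_{11}x_{12}y_{11}+x_{12}^2y_{21}$.

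The step I expect to be the main work is to prove that $h$ generates a prime ideal of $k[[x_{11},x_{12},y_{11},\hat y_{12},y_{21}]]$ that remains prime after $\otimes_k\bar k$; note that $h$ is not a unit after inverting $x_{12}$, since $h\equiv x_{12}^2y_{21}\pmod{x_{11}}$. Here the two remaining cases differ. If $\bar\rho$ is split then $y_{12}=\hat y_{12}$ and $h$ is homogeneous of degree $3$; it is irreducible over $\bar k$ as a polynomial (being linear in $\hat y_{12}$, with coprime coefficients $x_{11}^2$ and $x_{12}(2x_{11}y_{11}+x_{12}y_{21})$), and an irreducible homogeneous polynomial in $\bar k[x_1,\dots,x_n]$ is prime in $\bar k[[x_1,\dots,x_n]]$, because taking lowest-degree homogeneous parts turns a nontrivial factorisation in the power series ring into one in the polynomial ring. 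If $\bar\rho$ is unramified indecomposable then $y_{12}=\hat y_{12}+[\phi(\delta)]$ is a unit, $h$ is a degree-$2$ polynomial in $x_{11}$ with unit leading coefficient, and its discriminant $4x_{12}^2(y_{11}^2-y_{12}y_{21})$ is not a square in $\operatorname{Frac}(k[[x_{12},y_{11},\hat y_{12},y_{21}]])$ (the factor $x_{12}^2$ is a square, while $y_{11}^2-y_{12}y_{21}$ has order $1$, so it is not a square in that regular local ring, hence not in its fraction field); thus $h$ is irreducible there, and Weierstrass preparation identifies $k[[x_{11},x_{12},y_{11},\hat y_{12},y_{21}]]/(h)$ with $k[[x_{12},y_{11},\hat y_{12},y_{21}]][x_{11}]/(h)$, a domain. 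In either case $T[x_{12}^{-1}]/(J)$, and likewise its base change to $\bar k$, is a nonzero domain.

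It remains to draw the conclusions. The irreducible components of $V(J)\subseteq\Spec(T)$ are $V(\mathfrak q)$ for $\mathfrak q$ minimal over $JT$; as $T$ is a catenary domain and $J\ne 0$, each such $\mathfrak q$ has height $1$, so $V(J)$ is equidimensional of dimension $4$. On the other hand $V(J)\cap V(x_{12})=\Spec(T/(x_{12},J))$ has dimension $\le 3$, since modulo the nilpotent $x_{11}$ it becomes $k[[x_{21},y_{11},\hat y_{12},y_{21}]]/(y_{12}x_{21})$. Hence no component of $V(J)$ is contained in $V(x_{12})$, so each meets the open set $D(x_{12})$ densely; since $V(J)\cap D(x_{12})=\Spec(T[x_{12}^{-1}]/(J))$ is irreducible, $V(J)$, and therefore $\Spec(R/\pi)$, has a single irreducible component, and the same reasoning over $\bar k$ gives geometric irreducibility. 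For generic reducedness, let $\pp_0$ be the unique minimal prime of $R/\pi$; it corresponds to the minimal prime $\mathfrak q_0$ of $T$ over $JT$, and $x_{12}\notin\mathfrak q_0$ because $V(J)\cap D(x_{12})\ne\emptyset$. Localising $R/\pi=T/(x_{11}J,x_{12}J,x_{21}J)$ at $\pp_0$ makes $x_{12}$ invertible, so $(R/\pi)_{\pp_0}=(T[x_{12}^{-1}]/(J))_{\mathfrak q_0}$, the localisation of a domain at its generic point, which is a field. Thus $R/\pi$ is reduced at its unique minimal prime, i.e.\ generically reduced.
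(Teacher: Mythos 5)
There is a genuine gap at the step ``Inverting $x_{12}$ and using $I_1$ to eliminate $x_{21}$ gives $T[x_{12}^{-1}]\cong k[[x_{11},x_{12},y_{11},\hat y_{12},y_{21}]][x_{12}^{-1}]$.'' Eliminating a variable after localization is valid for polynomial rings, and for complete local rings when the coefficient of the eliminated variable is a \emph{unit} (as in $A[[z]]/(uz-a)\cong A$ with $u\in A^{\times}$, $a\in\mm_A$), but here the coefficient of $x_{21}$ in $I_1$ is $x_{12}$, a non-unit, and inverting it afterwards does not repair matters: power series in $x_{21}$ cannot be rewritten as elements of $C[x_{12}^{-1}]$, where $C:=k[[x_{11},x_{12},y_{11},\hat y_{12},y_{21}]]$. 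Concretely, in the model case the map $k[[x,y]][x^{-1}]\to k[[x,y,z]][x^{-1}]/(xz-y^2)$ is not surjective: the unit $1/(1-z)$ of the target is not of the form $c/x^N$, since $(c/x^N)(1-z)=1$ would force $(x-y^2)\mid x^{N+1}$ in $k[[x,y]]$, which is false as $x-y^2$ is a prime not containing $x$. (Equivalently, the prime $(x-y^2)$ of $k[[x,y]][x^{-1}]$ has empty fibre upstairs, so the two spectra do not even agree.) Everything downstream hinges on this identification: the irreducibility of $h$ in $C$ tells you that $(h)C[x_{12}^{-1}]$ is prime, but not that $(J)T[x_{12}^{-1}]$ is prime, which is what your equidimensionality argument and your computation of $(R/\pi)_{\pp_0}$ actually require. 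This is precisely why the paper, in Lemma \ref{Lemma 4} and Lemma \ref{Lemma 5}, first passes to an associated graded ring (a polynomial ring over $k[[y_{11},\hat y_{12},y_{21}]]$) before localizing and eliminating, and why in the indecomposable case it eliminates $x_{21}$ from $J$ (whose $x_{21}$-coefficient $y_{12}$ is a unit) rather than from $I_1$.

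The surrounding architecture is sound and genuinely different from the paper's: identifying $\Spec(R/\pi)$ with $V(J)\subseteq\Spec(T)$, showing $V(J)$ is equidimensional of dimension $4$ while $V(J)\cap V(x_{12})$ has dimension $3$, and localizing at the generic point for generic reducedness is a clean alternative to the paper's use of the exact sequence \eqref{eq:s1}. Moreover your two irreducibility computations are the right ones in the right setting: the discriminant argument proves that $k[[x_{11},x_{12},y_{11},\hat y_{12},y_{21}]]/(x_{11}^2+x_{12}y_{12}^{-1}(2x_{11}y_{11}+x_{12}y_{21}))$ is a domain, which (after the \emph{legitimate} elimination of $x_{21}$ via $J$) settles the indecomposable case; and the homogeneous-cubic argument is the kind of statement needed for the graded ring in the split case. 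But as written, the reduction of $T[x_{12}^{-1}]/(J)$ to a hypersurface in five power series variables is unjustified, and in the split case no easy fix along your lines is available: you need something equivalent to the paper's Lemma \ref{Lemma 5}, namely that $R/(\pi,J)=T/(J)$ is a domain, proved there via the graded ring and a saturation computation.
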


To prove the Proposition we need the following Lemma. As in the proof of Theorem \ref{Theorem 2} we define $J:=y_{12}x_{21}+2x_{11}y_{11}+x_{12}y_{21}.$
\begin{Lemma}\label{Lemma 5}
$R/(\pi,J)$ is an integral domain.
\end{Lemma}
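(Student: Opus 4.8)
The plan is to analyze the ring $R/(\pi,J)$ by cases as in Remark \ref{Remark 1}, mirroring the structure of Theorem \ref{Theorem 2}, but now aiming at integrality rather than multiplicity. First I would recall from the proof of Theorem \ref{Theorem 2} that modulo $\pi$ one has $I_2\equiv x_{12}J$, $I_3\equiv x_{21}J$, $I_4\equiv x_{11}J$, so that $R/(\pi,J)$ is the quotient of $k[[x_{11},\hat x_{12},x_{21},y_{11},\hat y_{12},y_{21}]]$ by the two relations $I_1 = x_{11}^2 - x_{12}x_{21}$ (using $v\equiv 0\bmod\pi$, since $\OO=W(k)$ case reduces here, and in general $v$ is a unit times $p$) and $J$. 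In the ramified case $x_{12}$ is a unit, and as in the earlier proofs the relation $I_1$ lets one eliminate $x_{21}$ and $J$ lets one eliminate $y_{21}$, giving $R/(\pi,J)\cong k[[x_{11},\hat x_{12},y_{11},\hat y_{12}]]$, manifestly a domain.

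For the unramified cases, where $x_{12}\in\mm$, the ring is $\bigl(k[[x_{11},x_{12},x_{21}]]/(x_{11}^2 - x_{12}x_{21})\bigr)[[y_{11},\hat y_{12},y_{21}]]/(J)$. The first factor $k[[x_{11},x_{12},x_{21}]]/(x_{11}^2 - x_{12}x_{21})$ is the completed local ring of an $A_1$ (ordinary double point) surface singularity, which is a normal domain; call it $B$. Then $R/(\pi,J) \cong B[[y_{11},\hat y_{12},y_{21}]]/(J)$ where $J = x_{21}y_{12} + 2x_{11}y_{11} + x_{12}y_{21}$ is linear in the variables $y_{11},\hat y_{12},y_{21}$ with coefficients $2x_{11}$, $x_{21}$, $x_{12}$ in $B$ (note $y_{12} = \hat y_{12} + [\phi(\delta)]$, so in the indecomposable case $y_{12}$ is a unit and in the split case $y_{12}=\hat y_{12}$). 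The key point is that $J$ is a nonzerodivisor in $B[[y_{11},\hat y_{12},y_{21}]]$ — indeed its image in the quotient by the maximal ideal of $B$ is a nonzero linear form, so $J\notin\mm_B\cdot B[[\dots]]$ is false but $J$ is still regular since $B$ is a domain and the coefficient ideal $(2x_{11},x_{21},x_{12})$ is nonzero — hence $R/(\pi,J)$ is a hypersurface over the regular-in-the-relevant-sense base. I would then argue that the coefficients $2x_{11}, x_{21}, x_{12}$ of $J$, together with the elements of $B$, generate the unit ideal after localizing away from the singular point of $B$, so that on that locus $J$ is a coordinate and the quotient is regular, hence a domain there; the singular locus of $B$ is the origin, which contributes a set of codimension $3$ in $R/(\pi,J)$.

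The cleanest route, and the one I would actually write, is: show $R/(\pi,J)$ is a $4$-dimensional complete intersection (two relations in six variables) that satisfies Serre's criterion $(R_0)+(S_1)$ for reducedness — which we essentially get for free since it is Cohen–Macaulay being a complete intersection, so $(S_1)$ holds automatically and we only need $(R_0)$, i.e. the localization at each minimal prime is a field, equivalently the singular locus has codimension $\ge 1$ — and then promote reducedness to irreducibility by exhibiting $\Spec(R/(\pi,J))$ as connected in codimension $1$ or by directly computing that the total ring of fractions is a field. Concretely, I would localize at $x_{12}$: we already saw $(R/(\pi,J))[x_{12}^{-1}]$ is a domain (it is a power series ring), and it is dense in $\Spec(R/(\pi,J))$ provided $x_{12}$ is a nonzerodivisor — which holds because the complement $V(x_{12})$ in the complete intersection has codimension $1$ and the ring is Cohen–Macaulay, so $x_{12}$ lies in no associated (= minimal) prime. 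Therefore $R/(\pi,J)$ has a unique minimal prime, i.e. is irreducible, and being reduced it is a domain.

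The main obstacle I anticipate is verifying that $x_{12}$ (equivalently $\hat x_{12}$) is a nonzerodivisor in $R/(\pi,J)$ — this is what makes the localization argument legitimate. I expect to handle it via the complete-intersection/Cohen–Macaulay structure: since $R/(\pi,J) = S'/(I_1,J)$ with $S' = k[[x_{11},\hat x_{12},x_{21},y_{11},\hat y_{12},y_{21}]]$ a regular local ring and $I_1, J$ a regular sequence (both are $4$-dimensional cuts, and one checks $\dim S'/(I_1,J) = 4$), the ring is Cohen–Macaulay, hence unmixed, so every associated prime has dimension $4$; as $V(x_{12})$ inside it is cut out further and has dimension $\le 3$ (one verifies $\dim (R/(\pi,J,x_{12})) = 3$ by an explicit Gröbner/parameter computation), $x_{12}$ avoids all associated primes and is a nonzerodivisor. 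Once that is in place, reducedness of the localization plus Cohen–Macaulayness (hence $(S_1)$) gives reducedness globally, and uniqueness of the minimal prime gives the domain property.
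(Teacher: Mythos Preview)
Your strategy via the Cohen--Macaulay structure is sound up through showing that $x_{12}$ is a nonzerodivisor, but the step you gloss over --- that $(R/(\pi,J))[x_{12}^{-1}]$ is a domain --- is where the real difficulty lies, and your justification ``it is a power series ring'' does not hold. In the unramified cases $x_{12}$ sits in the maximal ideal, and inverting it does not reproduce the ramified computation: the substitutions $x_{21}=x_{11}^2/x_{12}$ and $y_{21}=-x_{12}^{-1}(\cdots)$ introduce unbounded powers of $x_{12}^{-1}$, so the localization does not embed in any four-variable power series ring. What your argument does establish (implicitly via the Jacobian) is that the localization is \emph{regular}, hence a finite product of domains --- but the number of factors is exactly the number of minimal primes of $R/(\pi,J)$, since none of them contain the nonzerodivisor $x_{12}$. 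So asserting the localization is a domain is equivalent to the irreducibility you are trying to prove. Relatedly, your remark that the singular locus ``contributes a set of codimension $3$'' is off: $V(x_{11},x_{12},x_{21})$ has dimension $3$ in the $4$-dimensional ring, i.e.\ codimension $1$, which is why the shortcut ``normal local $\Rightarrow$ domain'' is unavailable here.

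The paper sidesteps this in the split case by first passing to the associated graded ring $\gr_{\nn}R/(\pi,J)$, which is a quotient of a \emph{polynomial} ring because $I_1$ and $J$ are homogeneous modulo $\pi$. In that setting one localizes at $y_{21}$, eliminates $x_{12}$ via $J$, and checks directly that the resulting hypersurface relation is irreducible; then the kernel of $A\to A[y_{21}^{-1}]/(J)$ is seen to be exactly $(J)$ by the elementary observation that $(y_{21})$ is prime in $A=k[x_{ij},y_{ij}]/(x_{11}^2-x_{12}x_{21})$ and $y_{21}\nmid J$. That prime-element step in the polynomial ring is what supplies the global irreducibility your regularity argument cannot reach on its own.
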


\begin{proof}
We again distinguish between 3 cases as in Remark \ref{Remark 1}. If $\bar{\rho}$ is ramified, i.e. $x_{12}$ is invertible, we have already seen in the proof of Theorem \ref{Theorem 2} that
\begin{align*}
R/(\pi,J)&\cong k[[x_{11},\hat x_{12},x_{21},y_{11},\hat y_{12},y_{21}]]/(x_{11}^2-x_{12}x_{21},J)\\
&\cong k[[x_{11},\hat x_{12},y_{11},\hat y_{12}]].
\end{align*}
If $\bar{\rho}$ is unramified and indecomposable, i.e. $x_{12}=\hat x_{12}\in \mm_R, y_{12}\in R^{\times}$ we saw that
\[R/(\pi,J)\cong k[[x_{11},x_{12},y_{11},\hat y_{12},y_{21}]]/(x_{11}^2+x_{12}y_{12}^{-1}(2x_{11}y_{11}+y_{21}x_{12}))\] which is easily checked to be an integral domain.
If $\bar{\rho}$ is unramified and split, i.e. $x_{12},y_{12}\in \mm_R,$ let $\nn$ denote the maximal ideal of $R/(\pi,J).$ It is enough to show that the graded ring $\mbox{gr}_{\nn}R/(\pi,J)$ is a domain. Since $J$ is homogeneous we have
\[\mbox{gr}_{\nn}R/(\pi,J)\cong k[x_{11},x_{12},x_{21},y_{11},y_{12},y_{21}]/(x_{11}^2-x_{12}x_{21}, J).\]
We set $A:=k[x_{11},x_{12},x_{21},y_{11},y_{12},y_{21}]/(x_{11}^2-x_{12}x_{21})$ and have to prove that $(J)\subset A$ is a prime ideal. We look at the localization map $A\xrightarrow{\iota} A[y_{21}^{-1}],$ which is an inclusion because $y_{21}$ is regular in $A.$ This gives us a map $A \xrightarrow{\bar{\iota}} A[y_{21}^{-1}]/(J).$ Since \[A[y_{21}^{-1}]/(J)\cong k[x_{11},x_{21},y_{11},y_{12},y_{21},y_{21}^{-1}]/(x_{11}^2+x_{21}y_{21}^{-1}(2x_{11}y_{11}+x_{21}y_{12}))\] is a domain, we would be done by showing that $\ker(\bar{\iota})=(J).$ We have \[\ker(\bar{\iota})=\{a\in A: y_{21}^ia=bJ\ \mbox{for some}\ i\in \ZZ_{\geq0}, b\in A : y_{21}\nmid b\}.\] But since $(y_{21})\subset A$ is a prime ideal and $y_{21}$ does not divide $J,$ we see that $i=0$ in all these equations and hence $\ker(\bar{\iota})=(J).$
\end{proof}

\begin{proof}[Proof of the Proposition]
Let $\pp$ be a minimal prime ideal of $S:=R/\pi.$
It follows from (\ref{eq:J2})-(\ref{eq:J4}) that $J^2=0$ and thus $J\in \mbox{rad}(S)=\bigcap_{\pp\ \mbox{minimal}}\pp.$ So Lemma \ref{Lemma 5} gives us that $JS$ is the only minimal prime ideal of $S,$ hence Spec$(S)$ is irreducible. If we replace the field $k$ by an extension $k',$
we obtain the irreducibility of Spec$(S\otimes_kk')$ analogously, thus Spec$(S)$ is geometrically irreducible.

Spec$(S)$ is called generically reduced if $S_{\pp}$ is reduced for any minimal prime ideal $\pp.$ We have already seen that there is only one minimal prime ideal $\pp=JS.$ By localizing (\ref{eq:s1}) we obtain $S_{\pp}\cong R/(\pi,J).$ Lemma \ref{Lemma 5} implies that $S_{\pp}$ is reduced.

\end{proof}

\subsection*{Acknowledgement}
This paper was written as a part of my PhD-thesis. I want to thank my advisor Prof. Dr. Vytautas Pa\v{s}k\={u}nas for his great support.

\bibliographystyle{mrl}
\bibliography{References}

\end{document}